\newtheorem{theorem}{Theorem}[section]
\newtheorem{proposition}[theorem]{Proposition}
\newtheorem{corollary}[theorem]{Corollary}
\newtheorem{lemma}[theorem]{Lemma}
\newtheorem{definition}[theorem]{Definition}
\theoremstyle{definition} \newtheorem{remark}[theorem]{Remark}
\numberwithin{figure}{section}
\newcommand{\bR}{{\mathbb R}}
\newcommand{\bN}{{\mathbb N}}
\newcommand{\bC}{{\mathbb C}}
\newcommand{\bZ}{{\mathbb Z}}
\newcommand{\bP}{{\mathbb P}}
\newcommand{\cF}{{\mathcal{F}}}
\newcommand{\cI}{{\mathcal{I}}}
\newcommand{\cH}{{\mathcal{H}}}
\begin{document}

\title[]{On the almost sure global well-posedness of energy sub-critical nonlinear wave equations on $\bR^3$}
\begin{abstract}
We consider energy sub-critical defocusing nonlinear wave equations on $\bR^3$ and establish the existence of unique global solutions almost surely with respect to a unit-scale randomization of the initial data on Euclidean space.  In particular, we provide examples of initial data at super-critical regularities which lead to unique global solutions. The proof is based on probabilistic growth estimates for a new modified energy functional. This work improves upon the authors' previous results in \cite{LM} by significantly lowering the regularity threshold and strengthening the notion of uniqueness.
\end{abstract}

\author[]{Jonas L\"uhrmann}
\address{Departement Mathematik \\ ETH Z\"urich \\ 8092 Z\"urich \\ Switzerland}
\email{jonas.luehrmann@math.ethz.ch}

\author[]{Dana Mendelson}
\address{Department of Mathematics \\ Massachusetts Institute of Technology \\ 77 Massachusetts Ave \\ Cambridge, MA 02139 \\ USA}
\email{dana@math.mit.edu}

\thanks{\textit{2010 Mathematics Subject Classification.} 35L05, 35R60, 35Q55}
\thanks{\textit{Key words and phrases.} nonlinear wave equation; almost sure global well-posedness; random initial data}
\thanks{The first author was supported in part by the Swiss National Science Foundation under grant SNF 200020-159925. The second author was supported in part by the U.S. National Science Foundation grant DMS-1362509.}

\maketitle

\section{Introduction} \label{sec:introduction}
\setcounter{equation}{0}

In this note we consider the Cauchy problem for the defocusing nonlinear wave equation
\begin{equation} \label{equ:ivp}
 \left\{ \begin{aligned}
  -\partial_t^2 u + \Delta u &= |u|^{p-1} u \text{ on } \bR \times \bR^3, \\
  (u, \partial_t u)|_{t=0} &= (f_1, f_2) \in H^s_x(\bR^3) \times H^{s-1}_x(\bR^3),
 \end{aligned} \right.
\end{equation}
where $3 < p < 5$ and $H^s_x(\bR^3)$ is the usual inhomogeneous Sobolev space. The main result of this paper establishes the almost sure existence of global unique solutions to~\eqref{equ:ivp} with respect to a unit-scale randomization of initial data in $H^s_x(\bR^3) \times H^{s-1}_x(\bR^3)$ for $3 < p < 5$ and $\frac{p-1}{p+1} < s < 1$. In particular, for the entire range of exponents $p$ we obtain large sets of super-critical initial data that lead to unique global solutions. This improves over the authors' previous result in \cite{LM} both regarding the threshold for allowable regularities and the notion of uniqueness of the solutions. Our proof relies on probabilistic growth estimates for a new modified energy functional which we introduce in \eqref{equ:energy}. These estimates are inspired by the recent work of Oh and Pocovnicu \cite{OP} on the almost sure global existence of solutions to the quintic nonlinear wave equation on $\bR^3$.

\medskip

The equation \eqref{equ:ivp} is invariant under the scaling transformation
\[
 u(t,x) \mapsto u_\lambda(t,x) = \lambda^{\frac{2}{p-1}} u(\lambda t, \lambda x) \quad \text{for } \lambda > 0,
\]
which gives rise to the scaling invariant critical regularity $s_c = \frac{3}{2} - \frac{2}{p-1}$. In \cite{Lindblad_Sogge}, Lindblad and Sogge construct local strong solutions to \eqref{equ:ivp} for sub-critical and critical regularities $s \geq s_c$ using Strichartz estimates for the wave equation. When $s < s_c$, this is the super-critical regime and the well-posedness arguments based on Strichartz estimates break down. Global solutions to \eqref{equ:ivp} were constructed by Kenig, Ponce and Vega \cite[Theorem 1.2]{KPV} using a high-low argument for $2 \leq p < 5$ and initial data in a range of sub-critical spaces below the energy space (see also \cite{BC}, \cite{GP} and \cite{Roy} for $p = 3$).
\medskip

Although it is known that the nonlinear wave equation \eqref{equ:ivp} is ill-posed below the critical scaling regularity (see \cite{L}, \cite{CCT} and \cite{IMM}), using probabilistic tools it is sometimes possible to construct large sets of initial data of super-critical regularity that lead to unique local and even global solutions. This approach was initiated by Bourgain \cite{B94, B96} for the periodic nonlinear Schr\"odinger equation in one and two space dimensions, building upon work by Lebowitz, Rose and Speer \cite{LRS}. Subsequently, Burq and Tzvetkov \cite{BT1, BT2} studied the cubic nonlinear wave equation on a three-dimensional compact manifold by randomizing with respect to an orthonormal eigenbasis of the Laplacian and using invariant measure considerations. Extensive work has been done on such problems in recent years, both in compact and non-compact settings but we restrict the following overview to results in Euclidean space. Many previous results on Euclidean space involve considering a related equation in a setting where an orthonormal basis of eigenfunctions of the Laplacian exists, see for instance \cite{BTT},  \cite{D1}, \cite{Poiret1}, \cite{Poiret2} for results of this type for the defocusing nonlinear Schr\"odinger equation on $\bR^d$ for $d \geq 2$, and \cite{Suzzoni1}, \cite{Suzzoni2} for results of this type for the nonlinear wave equation.

\medskip

It is also possible to randomize initial data directly on Euclidean space using a unit-scale decomposition of frequency space. In several works this has yielded almost sure well-posedness results for super-critical initial data, see for instance \cite{ZF}, \cite{LM}, \cite{BOP1}, \cite{BOP2}, \cite{Pocovnicu} and \cite{OP}. In \cite{LM}, the authors studied the random data problem for \eqref{equ:ivp} and proved almost sure global existence for energy sub-critical nonlinearities. In particular, for $\frac{1}{4}(7+ \sqrt{73}) \simeq 3.89 < p < 5$, the authors obtain almost sure global well-posedness for super-critical initial data. The proof combines a probabilistic local existence argument with Bourgain's high-low frequency decomposition~\cite{B98}, an approach introduced by Colliander and Oh \cite[Theorem 2]{CO} in the context of the one-dimensional periodic defocusing cubic nonlinear Schr\"odinger equation. We note that probabilistic high-low arguments only yield uniqueness in a mild sense, see \cite[Remark 4.3]{LM}, and thus do not provide a definitive answer to the question of uniqueness for super-critical initial data. The high-low method does not extend to energy critical situations, thus there is a natural obstruction to extending the authors' previous results to an energy critical setting. Pocovnicu \cite{Pocovnicu} later proved almost sure global well-posedness for the energy critical defocusing nonlinear wave equation on $\bR^d$ for $d=4,5$. More recently, Oh and Pocovnicu \cite{OP} have treated the energy critical nonlinear wave equation on $\bR^3$. These proofs use probabilistic perturbation theory together with a probabilistic a priori energy bound. See also \cite{BOP2} for a conditional result for the nonlinear Schr\"odinger equation using similar methods.

\subsection{Randomization procedure}

Before stating our main result, we introduce the randomization procedure for the initial data. Let $\psi \in C_c^{\infty}(\bR^3)$ be an even, non-negative function with $\text{supp} (\psi) \subset B(0,1)$ and such that 
\[
 \sum_{k \in \bZ^3} \psi(\xi - k) = 1 \text{ for all } \xi \in \bR^3.
\]
Let $s \in \bR$ and $f \in H^s_x(\bR^3)$. For every $k \in \bZ^3$, we define the function $P_k f: \bR^3 \rightarrow \bC$ by
\begin{equation*}
 (P_k f)(x) = \cF^{-1} \left( \psi(\xi - k) \hat{f}(\xi) \right)(x) \text{ for } x \in \bR^3.
\end{equation*} 
By requiring $\psi$ to be even, we ensure that
\begin{equation} \label{equ:symmetry}
 \overline{P_k f} = P_{-k} f
\end{equation}
for real-valued $f$. As in \cite{LM}, we crucially exploit that these projection operators satisfy a unit-scale Bernstein inequality, namely for all $2 \leq r_1 \leq r_2 \leq \infty$ ,
\begin{align} \label{equ:unit_scale_bernstein}
  \|P_k f\|_{L^{r_2}_x(\bR^3)} \leq C(r_1, r_2) \|P_k f\|_{L^{r_1}_x(\bR^3)}
\end{align}
uniformly for all $f \in L^2_x(\bR^3)$ and $k \in \bZ^3$.  

\medskip

Let now $\{ (h_k, l_k) \}_{k \in \bZ^3}$  be a sequence of zero-mean, complex-valued random variables on a probability space $(\Omega, {\mathcal A}, \bP)$ such that $h_{-k} = \overline {h_k}$ for all $k \in \bZ^3$, and similarly for the $l_k$. We assume that $\{h_0, \textup{Re}(h_k), \textup{Im}(h_k)\}_{k \in \cI}$ are independent, zero-mean, real-valued random variables, where $\cI$ is such that we have a \textit{disjoint} union $\bZ^3 = \cI \cup (-\cI) \cup \{0\}$, and similarly for the $l_k$. Let us denote by $\mu_{k}$ and $\nu_{k}$ the joint distributions of the real and imaginary parts of the $h_k$ and $l_k$, respectively. We assume that there exists $c > 0$ such that
\begin{equation} \label{eq:rvassumption}
 \left| \int_{-\infty}^{+\infty} e^{\gamma x} \, d\mu_{k}(x) \right| \leq e^{c \gamma^2} \text{ for  all } \gamma \in \bR \text{ and for all } k \in \bZ^3,
\end{equation}
and similarly for $\nu_{k}$. The assumption \eqref{eq:rvassumption} is satisfied, for example, by standard Gaussian random variables, standard Bernoulli random variables, or any random variables with compactly supported distributions. 

\medskip

For a given $f = (f_1, f_2) \in H^s_x(\bR^3) \times H^{s-1}_x(\bR^3)$ for some $s \in \bR$, we define its randomization by
\begin{equation} \label{equ:bighsrandomization} 
 f^{\omega} = (f_1^{\omega}, f_2^{\omega}) := \biggl( \sum_{k \in \bZ^3} h_k(\omega) P_k f_1, \sum_{k \in \bZ^3} l_k(\omega) P_k f_2 \biggr),
\end{equation}
where this quantity is understood as a Cauchy limit in $L^2(\Omega; H^s_x(\bR^3) \times H^{s-1}_x(\bR^3))$. The symmetry assumption on the random variables, as well as \eqref{equ:symmetry} ensure that the randomization of real-valued initial data is real-valued. Crucially, such a randomization does not regularize at the level of Sobolev spaces. Similar randomizations have previously been used in \cite{ZF}, \cite{LM}, \cite{BOP1}, \cite{BOP2}, \cite{Pocovnicu}, \cite{OP}. We point out that under the symmetry condition imposed in the randomization,
\begin{align} \label{equ:real_part}
 \qquad f_1^\omega &= \sum_{k \in \bZ^3} h_k(\omega) P_k f_1 = h_0(\omega) P_0 f_1 + 2 \sum_{k \in \cI } \bigl(\textup{Re} \, h_k(\omega) \textup{Re} \,P_k f_1 - \textup{Im} \,h_k(\omega) \textup{Im} \,P_k f_1 \bigr),
\end{align}
and similarly for $f_2^\omega$. In the following we will denote the free wave evolution of the initial data $f^\omega$ by
\begin{align} \label{equ:free_evolution}
 u_f^\omega = \cos(t|\nabla|) f_1^\omega + \frac{\sin(t|\nabla|)}{|\nabla|} f_2^\omega.
\end{align}

\subsection{Statement of the main result}

We are now prepared to state our main result.
\begin{theorem} \label{main_theorem}
 Let $3 < p < 5$ and $\frac{p-1}{p+1} < s < 1$. For real-valued $f = (f_1, f_2) \in H^s_x(\bR^3) \times H^{s-1}_x(\bR^3)$, let $f^\omega = (f_1^\omega, f_2^\omega)$ be the randomized initial data defined in \eqref{equ:bighsrandomization} and let $u_f^\omega$ be the free wave evolution \eqref{equ:free_evolution} of $f^\omega$. Then for almost every $\omega \in \Omega$, there exists a unique global solution
 \begin{equation} \label{equ:solution_main_theorem}
  (u, \partial_t u) \in (u_f^\omega, \partial_t u_f^\omega) + C\bigl(\bR; H^1_x(\bR^3) \times L^2_x(\bR^3)\bigr)
 \end{equation}
 to the nonlinear wave equation
 \begin{equation} \label{equ:nlw_main_theorem}
  \left\{ \begin{aligned}
   -\partial_t^2 u + \Delta u &= |u|^{p-1} u \text{ on } \bR \times \bR^3, \\
   (u, \partial_t u)|_{t=0} &= (f_1^\omega, f_2^\omega).
  \end{aligned} \right.
 \end{equation}
 Here, uniqueness holds in the sense that upon writing 
 \[
  (u, \partial_t u) = (u_f^\omega, \partial_t u_f^\omega) + (v, \partial_t v),
 \]
 there exists a unique global solution 
 \[
  (v, \partial_t v) \in C\bigl(\bR; H^1_x(\bR^3)\bigr) \cap L^{\frac{2p}{p-3}}_{t, loc} L^{2p}_x\bigl(\bR\times\bR^3\bigr) \times C\bigl(\bR; L^2_x(\bR^3)\bigr)
 \]
 to the forced nonlinear wave equation
 \begin{equation} \label{equ:forced_nlw_main_theorem}
  \left\{ \begin{aligned}
   -\partial_t^2 v + \Delta v &= |u_f^\omega + v|^{p-1} (u_f^\omega + v) \text{ on } \bR \times \bR^3, \\
   (v, \partial_t v)|_{t=0} &= (0,0).
  \end{aligned} \right.
 \end{equation}
\end{theorem}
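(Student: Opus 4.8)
The plan is to decompose the nonlinear wave equation into the free evolution of the random data plus a nonlinear remainder, and to establish global well-posedness for the latter by combining a probabilistic local theory with a global-in-time a priori bound for a modified energy functional. First I would set up the forced equation \eqref{equ:forced_nlw_main_theorem} for $v = u - u_f^\omega$ and run the deterministic contraction mapping argument in the Strichartz space $C(\bR; H^1_x) \cap L^{\frac{2p}{p-3}}_{t,loc} L^{2p}_x$: the forcing term involves products $|u_f^\omega + v|^{p-1}(u_f^\omega + v)$, and the only rough factor is $u_f^\omega$, which fails to lie in $H^1_x$ but — thanks to the improved Strichartz estimates and the unit-scale Bernstein inequality \eqref{equ:unit_scale_bernstein} — does lie almost surely in suitable space-time $L^q_t L^r_x$ norms on compact time intervals, at the regularity $s > \frac{p-1}{p+1}$ one is allowed. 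This yields, for almost every $\omega$, a local solution $v$ on a time interval whose length depends on the size of the random Strichartz norms of $u_f^\omega$. The notion of uniqueness in the statement is exactly uniqueness of $v$ in this Strichartz class, so the local theory already delivers the qualitative part of the conclusion.

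Next I would address globalization. The obstruction to iterating the local theory is that the natural energy of $v$, namely $E(v)(t) = \int \frac12 |\partial_t v|^2 + \frac12 |\nabla v|^2 + \frac{1}{p+1}|v|^{p+1} \, dx$, is not conserved because of the forcing by $u_f^\omega$; its time derivative picks up terms that are multilinear in $u_f^\omega$ and $v$ and cannot be controlled by $E(v)$ alone without loss. Following the strategy of Oh–Pocovnicu \cite{OP}, I would instead work with the modified energy functional alluded to in the introduction (equation \eqref{equ:energy} of the paper), designed so that its time derivative is better behaved: the worst contributions are either absorbed into the modified energy itself or estimated by quantities that are polynomial in $E$ and in space-time norms of $u_f^\omega$. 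Applying Gronwall's inequality then gives an a priori bound on the modified energy — hence on $\|(v,\partial_t v)(t)\|_{H^1_x \times L^2_x}$ — on any finite interval $[0,T]$, \emph{provided} the relevant random norms of $u_f^\omega$ on $[0,T]$ are finite. The key probabilistic input, exactly in the spirit of the assumption \eqref{eq:rvassumption} on the random variables, is that these space-time norms of $u_f^\omega$ are almost surely finite on every compact interval (with large-deviation tails), which follows from Khintchine-type inequalities combined with the unit-scale Bernstein estimate \eqref{equ:unit_scale_bernstein} and Strichartz estimates. Combining the a priori $H^1_x \times L^2_x$ bound with the local theory via a standard continuity/continuation argument then yields the global solution $v$, and setting $u = u_f^\omega + v$ gives \eqref{equ:solution_main_theorem}; the same argument run backward in time handles negative times.

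The main obstacle I anticipate is the construction and estimation of the modified energy functional: one must choose the correction terms so that \emph{every} term in $\frac{d}{dt}$ of the modified energy is controlled either by the modified energy itself (to feed Gronwall) or by an integrable-in-time function of the random data, and this requires a delicate accounting of which multilinear expressions in $u_f^\omega$ and $v$ appear. The regularity restriction $s > \frac{p-1}{p+1}$ should emerge precisely as the threshold at which the roughest such term — heuristically the one where $u_f^\omega$ appears with the top number of derivatives or the highest power — can still be closed; verifying that this is sharp for the method, and that all the required space-time norms of $u_f^\omega$ are almost surely finite at this regularity (via Strichartz estimates for the wave equation at regularity $s<1$ and the probabilistic gain from \eqref{eq:rvassumption}), is the technical heart of the argument. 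A secondary point is ensuring that the local Strichartz well-posedness for $v$ is genuinely unconditional in the stated class, so that the global solution obtained by continuation is the unique one; this is where using the full strength of the improved probabilistic Strichartz estimates, rather than just $L^\infty_t H^s_x$ bounds on $u_f^\omega$, is essential.
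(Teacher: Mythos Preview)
Your proposal is correct and follows essentially the same approach as the paper: decompose $u = u_f^\omega + v$, establish deterministic local well-posedness for $v$ in $C_t H^1_x \cap L^{\frac{2p}{p-3}}_t L^{2p}_x$ via contraction (Lemma~\ref{lem:lwp}), and globalize via Gronwall on the modified energy \eqref{equ:energy}, with the probabilistic Strichartz/Bernstein estimates supplying almost-sure finiteness of the needed space-time norms of $u_f^\omega$. The technical heart you anticipate is indeed Proposition~\ref{prop:energy_bounds}, where the threshold $s>\frac{p-1}{p+1}$ arises from the constraint $\frac{p-1}{2}(1-s)<s$ needed to close the estimate on the term $\int \partial_t u_f^\omega\, |u_f^\omega+v|^{p-1}(u_f^\omega+v)\,dx$ after writing $\partial_t u_f^\omega = \langle\nabla\rangle \tilde{u}_f^\omega$ and applying a dyadic decomposition together with the fractional chain rule and interpolation.
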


\begin{remark} \label{rem:uniqueness}
In contrast to the mild uniqueness of the authors' previous work \cite[Theorem 1.1]{LM}, Theorem~\ref{main_theorem} yields the more standard notion of uniqueness for solutions to semilinear wave equations, compare with \cite[Remark~4.3]{LM}. Moreover, the threshold for the allowable regularity in Theorem~\ref{main_theorem} has been significantly lowered as compared to that in \cite[Theorem 1.1]{LM}. In particular, we prove the existence of global solutions for initial data at super-critical regularities for all $3 < p < 5$, see Figure~1.1.
\end{remark}

\begin{figure}[hbt] \label{fig:plot}
 \centering
 \includegraphics[height=6.2cm,keepaspectratio]{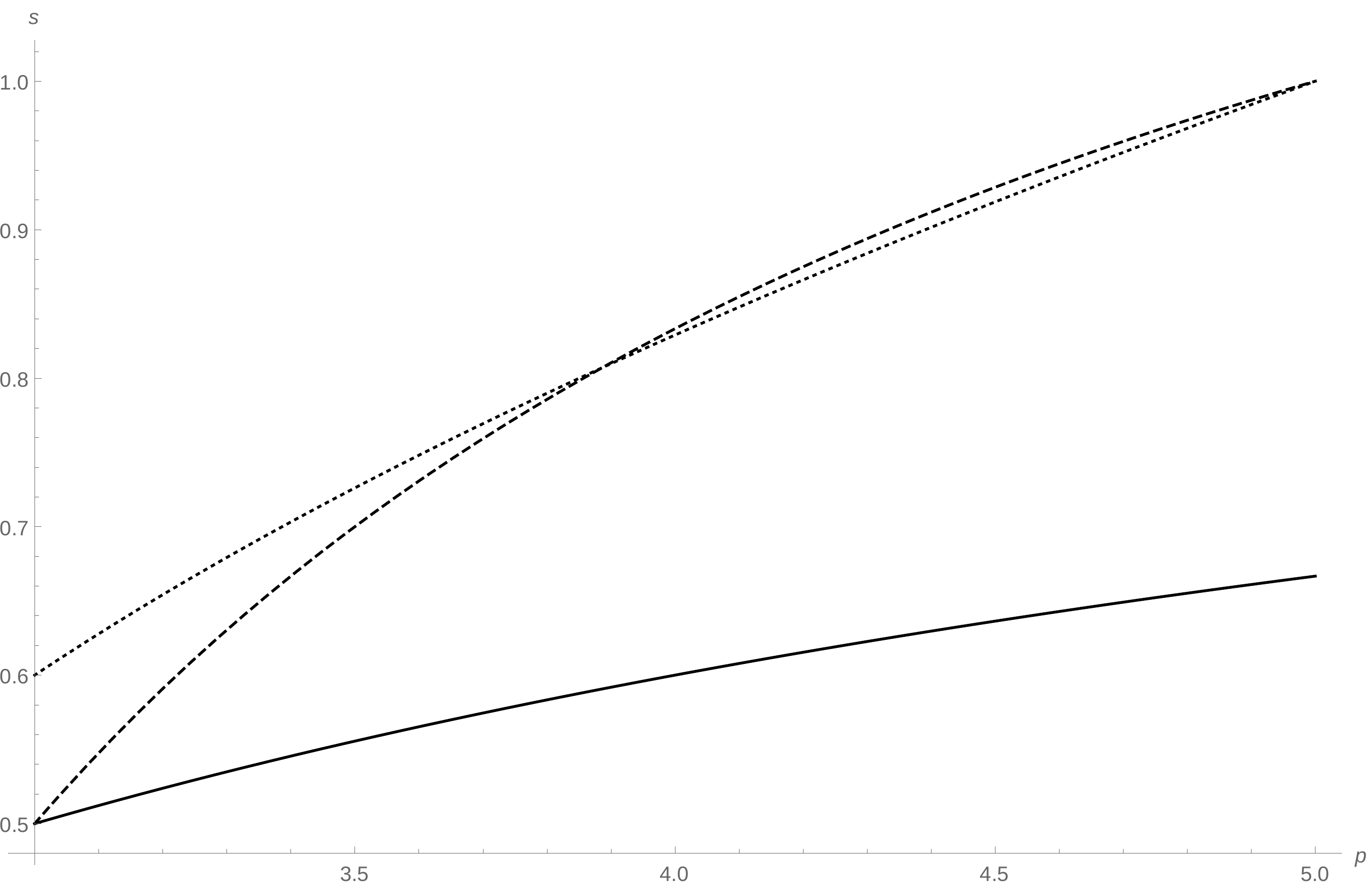}
 \caption{The dashed line is the critical regularity $s_c = \frac{3}{2} - \frac{2}{p-1}$. The solid line is the threshold for the exponent $s$ in Theorem~\ref{main_theorem}. The dotted line is the threshold from the authors' previous result in \cite[Theorem 1.1]{LM}.}
\end{figure} 

While the randomization \eqref{equ:bighsrandomization} does not regularize at the level of Sobolev spaces, the free evolution of the randomized initial data \eqref{equ:free_evolution} almost surely satisfies better space-time integrability properties. For this reason one can show that the nonlinear component of the solution lies in a better space, in this case $H^1_x(\bR^3) \times L^2_x(\bR^3)$, by constructing local solutions via a fixed point argument centered at the free evolution $u_f^\omega$. We will see that to conclude global existence, it suffices to control the growth of the $H^1_x(\bR^3) \times L^2_x(\bR^3)$ norm of the nonlinear component of the solution. The main novelty of this paper is the derivation of probabilistic growth estimates for the modified energy functional
\begin{align}
\label{equ:energy}
  E(v) = \int_{\bR^3} \frac{1}{2} |\nabla_x v|^2 + \frac{1}{2} |\partial_t v|^2 + \frac{1}{2} |v|^2 + \frac{1}{p+1} |u_f^\omega + v|^{p+1} \, dx
\end{align}
for $3 < p < 5$, where $v$ is the nonlinear component of the solution to \eqref{equ:ivp}. Consequently, we will be able to conclude that almost surely, we have the necessary control to extend the local solutions that we construct to global ones.

\medskip

We consider this modified energy functional for two reasons. The first is that the appearance of the free evolution of the randomized initial data in the potential term creates an important cancellation when computing the time derivative of the energy functional. Second, we need the appearance of the $L_x^2$ term in the energy in order to be able to estimate for $0 < \sigma < 1$,
\[
 \big\| |\nabla|^\sigma v \big\|_{L^2_x(\bR^3)}^2 \lesssim \|v\|^2_{L^2_x(\bR^3)} + \|\nabla_x v\|_{L^2_x(\bR^3)}^2 \lesssim E(v).
\]
Previously, energy methods for random data problems were used by Nahmod, Pavlovi\'c and Staffilani \cite{NPS} in the context of the periodic Navier-Stokes equation in two and three dimensions and by Burq and Tzvetkov for the three-dimensional periodic defocusing cubic nonlinear wave equation \cite{BT4}. Pocovnicu~\cite{Pocovnicu} and Oh and Pocovnicu~\cite{OP} used probabilistic energy bounds in conjunction with a probabilistic perturbation theory for the energy critical nonlinear wave equation. 

\begin{remark}
Our proof of the probabilistic energy estimates is inspired by the quintic case in \cite{OP}, with some important differences. In \cite{OP}, Oh and Pocovnicu only consider frequency truncated random initial data and show that almost surely the corresponding solutions satisfy energy bounds \emph{uniformly} in the truncation parameter, which allows one to construct solutions using probabilistic perturbation theory. Instead, we study the Cauchy problem with super-critical random initial data directly. To do so we make use of the observation that although the term $\partial_t u_f^\omega \in H^{s-1}_x(\bR^3)$ appears when taking the time derivative of our energy functional, this expression is always paired with a term at regularity $H^{1-s}_x(\bR^3)$. We therefore have no problem achieving the necessary bounds to close our Gronwall argument, see the proof of Proposition~\ref{prop:energy_bounds} below for more details. Additionally, the presence of non-algebraic nonlinearities introduces some complications in our estimates. To overcome this difficulty, a more careful analysis using the fractional chain rule and interpolation in Sobolev spaces is necessary. 
\end{remark}

\begin{remark}
Our proof does not yield any improvement at $p=3$. However, this case can be treated exactly as in the periodic case in \cite{BT4} using the energy functional
\[
 E(v) = \int_{\bR^3} \frac{1}{2} |\nabla_x v|^2 + \frac{1}{2} |\partial_t v|^2 + \frac{1}{4} |v|^4 \, dx.
\]
One obtains almost sure global existence for any $0 < s < 1$ and uniqueness holds in the same strong sense as in Theorem~\ref{main_theorem}, see Remark~1.5 in \cite{LM}.
\end{remark}

\begin{remark}
To prove scattering of the solutions \eqref{equ:solution_main_theorem} to the random data problem \eqref{equ:nlw_main_theorem}, one needs global control of the $L_t^p L_x^{2p}(\bR \times \bR^3)$ norm of the nonlinear component of the solutions. This will likely require new ideas, which we do not pursue here. A probabilistic version of scattering as in \cite[Theorem 1.4]{BOP2} might be possible, however this only yields scattering on sets of large probability.
\end{remark}

\subsection{Notation}
We denote by $C > 0$ an absolute constant that depends only on fixed parameters and whose value may change from line to line. We write $X \lesssim Y$ if $X \leq C Y$ for some $C > 0$, and analogously for $X \gtrsim Y$. In the sequel, $H^s_x(\bR^3)$, respectively $\dot{H}^s_x(\bR^3)$, denote the usual inhomogeneous, respectively homogeneous, Sobolev spaces. For $s \in \bR$ we define the space 
\[
\cH^s(\bR^3):= H^s_x(\bR^3) \times H^{s-1}_x(\bR^3),
\]
endowed with the obvious norm. We also introduce the shorthand notation $L^\infty_T L^r_x \equiv L^\infty_t L^r_x([0,T]\times\bR^3)$. Finally, for $a \in \bR$ we write $a+$ to denote $a + \varepsilon$ for some arbitrarily small, fixed parameter $\varepsilon > 0$.

\medskip

\noindent {\it  Organization of the paper:} In Section~\ref{sec:preliminaries} we collect several deterministic and probabilistic results. In Section~\ref{sec:proof_main_theorem} we first record a deterministic local well-posedness result for the forced nonlinear wave equation that is associated with the random data problem \eqref{equ:nlw_main_theorem}. Next, we derive key probabilistic energy bounds for the nonlinear components of the solutions to \eqref{equ:nlw_main_theorem}. The proof of Theorem~\ref{main_theorem} is then an immediate consequence.

\medskip

\noindent {\it Acknowledgments:} The authors would like to sincerely thank Michael Eichmair and Gigliola Staffilani for all of their encouragement and support.

\section{Preliminaries} \label{sec:preliminaries}
\setcounter{equation}{0}

\subsection{Deterministic preliminaries}

We begin by recalling the Strichartz estimates for the wave equation, for which we need the following definition.

\begin{definition}
An exponent pair $(q,r)$ is wave-admissible if $2 \leq q \leq \infty$, $2 \leq r < \infty$ and
 \begin{align}
  \frac{1}{q} + \frac{1}{r} \leq \frac{1}{2}.
 \end{align}
\end{definition}

\begin{proposition}[\protect{Strichartz estimates in three space dimensions; \cite{Strichartz}, \cite{Pecher}, \cite{GV2}, \cite{KeelTao}}] \label{prop:strichartz_estimates}
 Suppose $(q, r)$ and $(\tilde{q}, \tilde{r})$ are wave-admissible pairs. Let $u$ be a (weak) solution to the wave equation
 \begin{equation*}
  \left\{ \begin{aligned}
   -\partial_t^2 u + \Delta u \, &= \, h \text{ on } [0, T] \times \bR^3, \\
   (u, \partial_t u)|_{t=0} \, &= \, (f,g)
  \end{aligned} \right.
 \end{equation*}
 for some data $f, g, h$ and time $0 < T < \infty$. Then
 \begin{equation} \label{equ:strichartz_estimates}
  \begin{aligned}
   &\|u\|_{L^q_t L^r_x ([0, T] \times \bR^3)} + \|u\|_{L^{\infty}_t \dot{H}^{\gamma}_x([0,T] \times \bR^3)} + \|\partial_t u\|_{L^{\infty}_t \dot{H}^{\gamma-1}_x([0,T] \times \bR^3)} \\
   &\qquad \qquad \lesssim \, \|f\|_{\dot{H}_x^{\gamma}(\bR^3)} + \|g\|_{\dot{H}_x^{\gamma-1}(\bR^3)} + \|h\|_{L_t^{\tilde{q}'} L^{\tilde{r}'}_x([0,T] \times \bR^3)}
  \end{aligned}
 \end{equation}
 under the assumption that the following scaling conditions hold
 \begin{equation} \label{equ:strichartz_scaling_identities}
  \frac{1}{q} + \frac{3}{r} \, = \, \frac{3}{2} - \gamma \qquad \textup{and} \qquad \frac{1}{\tilde{q}'} + \frac{3}{\tilde{r}'} - 2 \, = \, \frac{3}{2} - \gamma.
 \end{equation} 
\end{proposition}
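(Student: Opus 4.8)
The plan is to recover \eqref{equ:strichartz_estimates} from the classical combination of a frequency-localized dispersive decay bound with the abstract $TT^*$ machinery of Keel and Tao, followed by a Littlewood--Paley resummation. First I would reduce to the half-wave propagator: by Duhamel's principle the weak solution satisfies
\[
 u(t) = \cos(t|\nabla|) f + \frac{\sin(t|\nabla|)}{|\nabla|} g + \int_0^t \frac{\sin((t-s)|\nabla|)}{|\nabla|} h(s) \, \ud s ,
\]
and since $\cos(t|\nabla|)$ and $\sin(t|\nabla|)$ are linear combinations of $e^{\pm i t|\nabla|}$, it suffices to prove the homogeneous space-time estimate for $e^{it|\nabla|}$ together with its retarded analogue; the $L^\infty_t \dot H^\gamma_x$ and $L^\infty_t \dot H^{\gamma-1}_x$ contributions on the left-hand side then follow at once from Plancherel's theorem applied to the data and from the energy estimate applied to the Duhamel term.

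Next I would record the analytic input. Writing $P_N$ for a Littlewood--Paley projection to frequencies $|\xi| \sim N$, stationary phase applied to the kernel gives the dispersive bound
\[
 \bigl\| e^{it|\nabla|} P_N f \bigr\|_{L^\infty_x(\bR^3)} \lesssim N^3 (1 + N|t|)^{-1} \|f\|_{L^1_x(\bR^3)} ,
\]
and interpolating with $\|e^{it|\nabla|} P_N f\|_{L^2_x} \lesssim \|f\|_{L^2_x}$ yields, for $2 \le r < \infty$ and $\theta = 1 - \tfrac{2}{r}$,
\[
 \bigl\| e^{it|\nabla|} P_N f \bigr\|_{L^r_x(\bR^3)} \lesssim N^{3\theta} (1 + N|t|)^{-\theta} \|f\|_{L^{r'}_x(\bR^3)} .
\]
I would then feed the unit-frequency ($N \sim 1$) version of this decay estimate, which carries decay exponent $\sigma = 1$, into the abstract theorem of Keel and Tao. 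A pair $(q,r)$ is $\sigma$-admissible with $\sigma = 1$ exactly when $2 \le q \le \infty$, $2 \le r < \infty$, $\tfrac1q + \tfrac1r \le \tfrac12$ and $(q,r) \ne (2,\infty)$, i.e.\ precisely when $(q,r)$ is wave-admissible; this is why the double endpoint $(2,\infty)$ is excluded, and in $\bR^3$ that exclusion is already built into the requirement $r < \infty$, so no genuine endpoint estimate is needed and the abstract argument reduces to Hardy--Littlewood--Sobolev together with real interpolation. The Christ--Kiselev lemma is then used to pass from the global-in-time inhomogeneous estimate to the retarded one and to restrict the time integration to $[0,T]$.

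Finally I would undo the frequency localization. Rescaling the unit-frequency estimate to frequency $N$ produces $\|e^{it|\nabla|} P_N f\|_{L^q_t L^r_x} \lesssim N^{\gamma} \|P_N f\|_{L^2_x}$ with $\gamma = \tfrac32 - \tfrac1q - \tfrac3r$ --- which is the first relation in \eqref{equ:strichartz_scaling_identities} --- and summing in $N$ by Minkowski's inequality and the Littlewood--Paley square function estimate (legitimate since $2 \le q, r < \infty$; the cases $q = \infty$ or $r = 2$ being handled directly by Plancherel) gives $\|e^{it|\nabla|} f\|_{L^q_t L^r_x} \lesssim \|f\|_{\dot H^\gamma_x(\bR^3)}$. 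For the retarded term the same resummation applies, the $|\nabla|^{-1}$ in the Duhamel kernel and the scaling of the inhomogeneous Keel--Tao estimate together accounting for the two extra derivatives recorded in the second relation in \eqref{equ:strichartz_scaling_identities}. Assembling the homogeneous and retarded pieces then yields \eqref{equ:strichartz_estimates}. I expect the only delicate point to be the bookkeeping of derivative weights in this last resummation step --- one needs the scaling relations to hold with equality so that the $N^\gamma$ factors match the homogeneous Sobolev norm on the data and the $L^{\tilde q'}_t L^{\tilde r'}_x$ norm on the forcing exactly --- whereas the potentially tricky endpoint case of the Keel--Tao argument never arises here because wave-admissibility in three space dimensions forbids $r = \infty$.
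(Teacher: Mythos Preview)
The paper does not supply its own proof of this proposition: it is stated with citations to \cite{Strichartz}, \cite{Pecher}, \cite{GV2}, and \cite{KeelTao} and then used as a black box. Your sketch is a correct outline of the standard modern proof --- frequency-localized dispersive decay, the abstract Keel--Tao lemma, Christ--Kiselev for the retarded term, and Littlewood--Paley resummation to recover the homogeneous Sobolev weights --- and is essentially the argument one would extract from the cited references, particularly \cite{KeelTao}. Your observation that the three-dimensional wave-admissibility condition already excludes $r = \infty$, so that the sharp endpoint case of Keel--Tao is never needed here, is accurate and worth noting.
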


We say that a wave-admissible pair $(q,r)$ is Strichartz-admissible at regularity $\gamma$ if it satisfies the first identity in \eqref{equ:strichartz_scaling_identities} for some $0 < \gamma < \frac{3}{2}$. 

\medskip

We will need the following two results in order to handle the fractional derivatives which appear in the energy bounds argument in Proposition~\ref{prop:energy_bounds}.

\begin{proposition}[\protect{Fractional chain rule; \cite{CW}}] 
Let $G \in C^1(\bC)$, $\sigma \in (0, 1]$ and suppose that $1 < r, r_1, r_2 < \infty$ satisfy $\frac{1}{r} = \frac{1}{r_1} + \frac{1}{r_2}$. Then
\begin{equation} \label{equ:fractional_chain_rule}
 \| |\nabla|^\sigma G(u) \|_{L^r_x(\bR^3)} \lesssim \| G'(u) \|_{L^{r_1}_x(\bR^3)} \| |\nabla|^\sigma u\|_{L^{r_2}_x(\bR^3)}.
\end{equation}
\end{proposition}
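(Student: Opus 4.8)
The plan is as follows. The endpoint $\sigma = 1$ is immediate: since the Riesz transforms are bounded on $L^q_x(\bR^3)$ for $1 < q < \infty$, we have $\| \, |\nabla| G(u) \|_{L^r_x} \approx \| \nabla G(u) \|_{L^r_x}$, and the usual chain rule $\nabla G(u) = G'(u) \nabla u$ (with $G'$ understood through the Wirtinger derivatives of $G \in C^1(\bC)$) together with H\"older's inequality with $\frac{1}{r} = \frac{1}{r_1} + \frac{1}{r_2}$ gives the claim. So assume $0 < \sigma < 1$. I would then invoke the square-function characterization of the homogeneous fractional Sobolev spaces due to Strichartz and Stein: for $1 < q < \infty$ and $0 < \sigma < 1$,
\[
 \big\| \, |\nabla|^\sigma g \big\|_{L^q_x(\bR^3)} \approx \Big\| \Big( \int_{\bR^3} \frac{|g(\cdot+y) - g(\cdot)|^2}{|y|^{3+2\sigma}} \, dy \Big)^{\frac{1}{2}} \Big\|_{L^q_x(\bR^3)}.
\]
Applying this with $g = G(u)$, the proposition reduces to an $L^r_x$ bound for the square function built from the first differences $G(u(x+y)) - G(u(x))$.

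Next, by the fundamental theorem of calculus,
\[
 G(u(x+y)) - G(u(x)) = \Big( \int_0^1 G'\big(u(x) + \theta(u(x+y) - u(x))\big) \, d\theta \Big) \big( u(x+y) - u(x) \big),
\]
and I would split the averaged factor as $G'(u(x))$ plus the remainder $\int_0^1 \big( G'(u(x) + \theta(u(x+y)-u(x))) - G'(u(x)) \big) \, d\theta$. By the triangle inequality in $L^2\big( \bR^3, |y|^{-3-2\sigma} \, dy \big)$ the square function of the difference is controlled by the square functions of the two resulting pieces. For the main piece $G'(u(x)) \big( u(x+y) - u(x) \big)$, the constant factor $|G'(u(x))|$ comes out of the $y$-integral and we are left with $|G'(u(x))|$ times the square function associated with $u$ itself; taking the $L^r_x$ norm, H\"older's inequality with exponents $r_1, r_2$ and the square-function characterization applied in reverse to $u$ produce exactly $\| G'(u) \|_{L^{r_1}_x} \big\| \, |\nabla|^\sigma u \big\|_{L^{r_2}_x}$.

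The main obstacle is the remainder piece, in which $G'$ is evaluated at intermediate points $u(x) + \theta(u(x+y) - u(x))$ lying on segments in the range of $u$, rather than at values of $u$; these cannot be absorbed by a Hardy--Littlewood maximal function of $G'(u)$. Following Christ and Weinstein \cite{CW}, I would control this term using only the continuity of $G'$: first reduce, by truncation and approximation, to the case that $G'$ is uniformly continuous with modulus $\omega$; then decompose the $y$-integral according to the dyadic size of $|u(x+y) - u(x)|$, use $|G'(u(x) + \theta(u(x+y)-u(x))) - G'(u(x))| \lesssim \omega\big( |u(x+y) - u(x)| \big)$ on each piece, and check that the gain from $\omega$ in the small-difference regime absorbs the singular weight $|y|^{-3-2\sigma}$ while the large-difference regime sums; a density argument then recovers general $G' \in C^0$. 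For the nonlinearities actually used in this paper, namely $G(u) = |u|^{p-1}u$ and related expressions with $3 < p < 5$, $G'$ is locally Lipschitz (in fact $C^1$), so the remainder estimate is elementary in our setting.
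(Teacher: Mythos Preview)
The paper does not supply its own proof of this proposition; it is quoted as a preliminary result with a citation to Christ--Weinstein \cite{CW}. So there is nothing in the paper to compare your argument against, and your sketch is in fact an attempt to reproduce (a version of) the proof in \cite{CW}.

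Your treatment of the endpoint $\sigma = 1$ and of the main piece via the Strichartz square-function characterization is correct and is the standard route. The genuine difficulty, as you say, is the remainder term involving $G'$ at intermediate points. Here your outline is not quite a proof in the stated generality $G \in C^1(\bC)$. A bare modulus of continuity $\omega$ for $G'$ carries no rate, so there is no actual ``gain from $\omega$ in the small-difference regime'' to play off against the weight $|y|^{-3-2\sigma}$; the dyadic decomposition you describe would at best bound the remainder square function pointwise by a constant (depending on $\sup \omega$) times the square function of $u$, which lands in $L^{r_2}_x$ rather than $L^{r}_x$ and does not recover the factor $\|G'(u)\|_{L^{r_1}_x}$. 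The density/approximation step you allude to does not close this gap either, since the implicit constants in the approximate estimates would blow up. The original argument in \cite{CW} organizes the estimate differently to avoid this issue.

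That said, your final remark is exactly right and is all that the present paper needs: for the nonlinearities $G(u) = |u|^{p-1}u$ and $G(u) = |u|^{p-1}$ with $3 < p < 5$ that are actually used in Proposition~\ref{prop:energy_bounds}, $G'$ is locally H\"older (indeed locally Lipschitz), and then the remainder is controlled by $|u(x+y)-u(x)|$ times a factor dominated by $\bigl( M(|G'(u)|)(x) + M(|G'(u)|)(x+y) \bigr)$, after which H\"older and the maximal inequality give the claimed bound. So your sketch, read as a proof for the class of nonlinearities relevant to this paper, is correct.
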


\medskip

\begin{proposition}[\protect{Interpolation estimate; \cite[Section 6.4]{BL}}] 
Let $0 < \theta < 1$, $0 \leq \sigma_0 < \sigma_1$ and $1 < r_0 < r_1 < \infty$. Define $\sigma_\theta$ and $r_\theta$ by
\[
 \sigma_\theta = (1-\theta) \sigma_0 + \theta \sigma_1 \qquad \textup{and} \qquad \frac{1}{r_\theta} = \frac{1-\theta}{r_0} + \frac{\theta}{r_1}. 
\]
Suppose that $u \in \dot{W}^{\sigma_0, r_0}_x(\bR^3) \cap \dot{W}^{\sigma_1, r_1}_x(\bR^3)$. Then $u \in \dot{W}^{\sigma_\theta, r_\theta}_x(\bR^3)$ and
\begin{equation} \label{equ:interpolation_inequality_in_proposition}
 \|u\|_{\dot{W}^{\sigma_\theta, r_\theta}_x(\bR^3)} \lesssim \|u\|^{1-\theta}_{\dot{W}^{\sigma_0, r_0}_x(\bR^3)} \|u\|^\theta_{\dot{W}^{\sigma_1, r_1}_x(\bR^3)}.
\end{equation}
\end{proposition}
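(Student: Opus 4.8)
The plan is to recognize this inequality as the single‑vector form of the statement that the homogeneous Sobolev spaces on $\bR^3$ form a complex interpolation scale, and then to deduce the estimate from the general abstract properties of the complex interpolation functor. I would organize the argument in two steps, plus a remark on a more hands‑on alternative.

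\emph{Step 1 (the interpolation identity).} First I would establish that, under the stated relations between the parameters,
\[
 \bigl[ \dot{W}^{\sigma_0, r_0}_x(\bR^3), \, \dot{W}^{\sigma_1, r_1}_x(\bR^3) \bigr]_\theta = \dot{W}^{\sigma_\theta, r_\theta}_x(\bR^3)
\]
with equivalence of norms. The most economical route is a retraction--coretraction argument: since the Fourier multiplier $|\nabla|^{\tau}$ is an isomorphism from $\dot{W}^{\sigma + \tau, r}_x(\bR^3)$ onto $\dot{W}^{\sigma, r}_x(\bR^3)$ for every $\sigma, \tau \in \bR$ and $1 < r < \infty$, one can conjugate the whole problem down to the classical identity $[L^{r_0}_x(\bR^3), L^{r_1}_x(\bR^3)]_\theta = L^{r_\theta}_x(\bR^3)$. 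Making this rigorous requires knowing that the purely imaginary powers $|\nabla|^{i t}$ act boundedly on $L^r_x(\bR^3)$ for all $1 < r < \infty$ with operator norm growing at most polynomially in $t$, which one obtains from the Mikhlin--H\"ormander multiplier theorem by keeping track of the dependence of the constant on the symbol bounds. (Equivalently, one may pass through the Littlewood--Paley characterization $\dot{W}^{\sigma, r}_x(\bR^3) = \dot{F}^{\sigma}_{r, 2}(\bR^3)$ and the interpolation theorem for Triebel--Lizorkin spaces, observing that the fine index interpolates from $2$ and $2$ back to $2$.)

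\emph{Step 2 (the single‑vector bound).} Next I would invoke the general property of the complex interpolation functor that for any compatible Banach couple $(A_0, A_1)$ and any $a \in A_0 \cap A_1$,
\[
 \|a\|_{[A_0, A_1]_\theta} \leq \|a\|_{A_0}^{1-\theta} \, \|a\|_{A_1}^{\theta}.
\]
This follows by inserting the entire, $(A_0 \cap A_1)$‑valued test function $z \mapsto \bigl( \|a\|_{A_0} / \|a\|_{A_1} \bigr)^{z - \theta} a$ into the extremal problem defining the interpolation norm and reading off the resulting three‑lines bound (the degenerate case $\|a\|_{A_0}\|a\|_{A_1} = 0$ being trivial). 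Combining this with Step 1 immediately yields
\[
 \|u\|_{\dot{W}^{\sigma_\theta, r_\theta}_x(\bR^3)} \lesssim \|u\|_{[\dot{W}^{\sigma_0, r_0}_x, \, \dot{W}^{\sigma_1, r_1}_x]_\theta} \lesssim \|u\|_{\dot{W}^{\sigma_0, r_0}_x(\bR^3)}^{1-\theta} \, \|u\|_{\dot{W}^{\sigma_1, r_1}_x(\bR^3)}^{\theta}
\]
for every $u$ in the intersection.

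A more hands‑on alternative, bypassing the abstract scale identity, is to run the three‑lines lemma directly on the analytic family of multiplier operators $z \mapsto |\nabla|^{\sigma_0 + z(\sigma_1 - \sigma_0)}$ applied to $u$: dualizing the left‑hand side against the standard Riesz--Thorin analytic family of $L^{r_\theta'}_x$‑normalized test functions, one controls the resulting entire function by $\|u\|_{\dot{W}^{\sigma_0, r_0}_x}$ on the line $\mathrm{Re}\, z = 0$ and by $\|u\|_{\dot{W}^{\sigma_1, r_1}_x}$ on the line $\mathrm{Re}\, z = 1$ (once again via the polynomial‑in‑$t$ bounds for $|\nabla|^{i t}$), and Stein interpolation, with the usual $e^{\varepsilon(z^2 - \theta^2)}$ regularization to absorb the subexponential growth, delivers the estimate at $z = \theta$. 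In either approach the points demanding the most care are the bookkeeping inherent to homogeneous Sobolev spaces on $\bR^3$ — whose elements live a priori in tempered distributions modulo polynomials, so that one should prove the inequality first on a dense class of smooth functions with all three norms finite and then pass to the limit — together with the verification of the bounded action, with polynomial norm growth, of the imaginary powers of $|\nabla|$ on $L^r_x(\bR^3)$, which is precisely what makes the complex interpolation machinery applicable here.
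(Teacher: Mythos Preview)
Your proposal is a correct and thorough outline of the standard complex‑interpolation proof of this inequality. Note, however, that the paper does not supply its own proof of this proposition: it is stated as a known result with a citation to \cite[Section~6.4]{BL} (Bergh--L\"ofstr\"om), and is used as a black box in the energy estimates of Proposition~\ref{prop:energy_bounds}. What you have written is essentially the argument one finds in that reference --- identifying $\dot W^{\sigma,r}_x$ with a complex interpolation space via the isomorphism $|\nabla|^\tau$ (equivalently, via the $\dot F^\sigma_{r,2}$ characterization) and then reading off the single‑vector bound from the three‑lines lemma --- so in that sense your approach coincides with the intended one. Your remarks on the care needed with homogeneous spaces (distributions modulo polynomials, density arguments) and on the polynomial growth of $\||\nabla|^{it}\|_{L^r\to L^r}$ are well placed; these are exactly the technical points one must check to make the abstract machinery apply, and they are handled in the cited reference.
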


\medskip

In the proof of the key energy bounds in Proposition~\ref{prop:energy_bounds}, we shall also need the following Littlewood-Paley projections. Let $\varphi \in C_c^\infty(\bR^3)$ be a radial smooth bump function satisfying $\varphi(\xi) = 1$ for $|\xi| \leq 1$ and $\varphi(\xi) = 0$ for $|\xi| > 2$. We define 
\[
 \widehat{P_1 f}(\xi) := \varphi(\xi) \hat{f}(\xi) 
\]
and for every dyadic $N \geq 2$,
\[
 \widehat{P_N f}(\xi) := \bigl( \varphi(\xi/N) - \varphi(2\xi/N) \bigr) \hat{f}(\xi).
\]
Moreover, we denote by $\widetilde{P}_N$ a fattened Littlewood-Paley projection such that $\widetilde{P}_N P_N = P_N$ for all dyadic $N \geq 1$. 

\subsection{Probabilistic preliminaries}

In this subsection we prove several large deviation estimates for the free evolution of the randomized initial data. In the proof of the key energy bounds in Proposition~\ref{prop:energy_bounds} below, we also need large deviation estimates for the modified free evolution
\begin{equation} \label{equ:modified_free_evolution}
 \tilde{u}_f^\omega := - \frac{|\nabla|}{\langle \nabla \rangle} \sin(t|\nabla|) f_1^\omega + \frac{\cos(t|\nabla|)}{\langle \nabla \rangle} f_2^\omega.
\end{equation}
We note that $\tilde{u}_f^\omega$ satisfies $\partial_t u_f^\omega = \langle \nabla \rangle \tilde{u}_f^\omega$. 

\medskip

The following large deviation estimate is stated for real-valued random variables for simplicity. However, in light of the expression \eqref{equ:real_part} and the independence assumptions on the real and imaginary parts of the random variables used in the randomization, this estimate readily yields the desired results in our setting.

\begin{lemma}[\protect{\cite[Lemma 3.1]{BT1}}] \label{lem:large_deviation_estimate}
 Let $\{h_n\}_{n=1}^{\infty}$ be a sequence of real-valued independent random variables with associated distributions $\{\mu_n\}_{n=1}^{\infty}$ on a probability space $(\Omega, {\mathcal A}, \bP)$. Assume that the distributions satisfy the property that there exists $c > 0$ such that
 \begin{equation*}
  \biggl| \int_{-\infty}^{+\infty} e^{\gamma x} d\mu_n(x) \biggr| \leq e^{c \gamma^2} \text{ for  all } \gamma \in \bR \text{ and for all } n \in \mathbb{N}.
 \end{equation*}
 Then there exists $\alpha > 0$ such that for every $\lambda > 0$ and every sequence $\{c_n\}_{n=1}^{\infty} \in \ell^2(\bN;\bC)$ of complex numbers, 
 \begin{equation*}
  \bP \Bigl( \bigl\{ \omega : \bigl| \sum_{n=1}^{\infty} c_n h_n(\omega) \bigr| > \lambda \bigr\} \Bigr) \leq 2 \exp \biggl(- \alpha \frac{\lambda^2}{\sum_n |c_n|^2} \biggr). 
 \end{equation*}
 As a consequence there exists $C > 0$ such that for every $p \geq 2$ and every $\{c_n\}_{n=1}^{\infty} \in \ell^2(\bN; \bC)$,
 \begin{equation*}
  \Bigl\| \sum_{n=1}^{\infty} c_n h_n(\omega) \Bigr\|_{L^p_\omega(\Omega)} \leq C \sqrt{p} \Bigl( \sum_{n=1}^{\infty} |c_n|^2 \Bigr)^{1/2}.
 \end{equation*}
\end{lemma}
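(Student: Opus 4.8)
The plan is to prove the subgaussian tail bound by a standard Chernoff (exponential moment) argument, and then to deduce the $L^p_\omega(\Omega)$ estimate by integrating that tail bound against $p\lambda^{p-1}\,d\lambda$. The first reduction is to real coefficients: writing $c_n = a_n + i b_n$ with $a_n, b_n \in \bR$ and using that the $h_n$ are real-valued, one has $\sum_n c_n h_n = S_a + i S_b$ where $S_a := \sum_n a_n h_n$ and $S_b := \sum_n b_n h_n$, and $\sum_n |c_n|^2 = \sum_n a_n^2 + \sum_n b_n^2 =: Q$. Since
\[
 \Bigl\{ \bigl| \textstyle\sum_n c_n h_n \bigr| > \lambda \Bigr\} \subseteq \bigl\{ |S_a| > \lambda/\sqrt{2} \bigr\} \cup \bigl\{ |S_b| > \lambda/\sqrt{2} \bigr\},
\]
it suffices to estimate $\bP(|S_a| > \mu)$ for a real square-summable sequence $\{a_n\}$ with $\sum_n a_n^2 \leq Q$ and every $\mu > 0$. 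Before that I would observe that the moment hypothesis already forces $\bE[h_n] = 0$ and $\sup_n \bE[h_n^2] < \infty$ (expand $\int e^{\gamma x}\,d\mu_n(x)$ to first and second order in $\gamma$ and compare both signs of $\gamma$), so that the partial sums $\sum_{n \leq N} a_n h_n$ form an $L^2(\Omega)$-bounded martingale; hence $S_a$ is well defined as an almost sure and $L^2(\Omega)$ limit.

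For the Chernoff step I would fix $t > 0$, use independence together with Fatou's lemma applied to the partial sums, and invoke the moment hypothesis with $\gamma = t a_n$, to obtain
\[
 \bE\bigl[ e^{t S_a} \bigr] \leq \prod_n \bE\bigl[ e^{t a_n h_n} \bigr] \leq \prod_n e^{c t^2 a_n^2} \leq e^{c t^2 Q}.
\]
Markov's inequality then gives $\bP(S_a > \mu) \leq \exp(- t\mu + c t^2 Q)$, and optimizing in $t$ (at $t = \mu/(2cQ)$) yields $\bP(S_a > \mu) \leq \exp(- \mu^2/(4cQ))$; applying the same bound with $a_n$ replaced by $-a_n$ and adding, $\bP(|S_a| > \mu) \leq 2\exp(- \mu^2/(4cQ))$. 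Taking $\mu = \lambda/\sqrt{2}$, using $\sum_n a_n^2, \sum_n b_n^2 \leq Q$, and combining the contributions of $S_a$ and $S_b$ produces $\bP(|\sum_n c_n h_n| > \lambda) \leq 4\exp(- \lambda^2/(8cQ))$. Since the left-hand side is also bounded by $1$ and $\min(1, 4 e^{-y}) \leq 2 e^{-y/2}$ for all $y \geq 0$, this is precisely the claimed estimate with $\alpha = 1/(16c)$.

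Finally, for the $L^p_\omega(\Omega)$ bound I would set $X := |\sum_n c_n h_n|$ and, using the tail bound and the substitution $u = \alpha \lambda^2/Q$, compute
\[
 \bE\bigl[ X^p \bigr] = p \int_0^\infty \lambda^{p-1} \bP(X > \lambda)\,d\lambda \leq 2 p \int_0^\infty \lambda^{p-1} e^{- \alpha \lambda^2/Q}\,d\lambda = p\, (Q/\alpha)^{p/2}\, \Gamma(p/2).
\]
Taking $p$-th roots and using $p^{1/p} \leq e^{1/e}$ together with $\Gamma(p/2)^{1/p} \lesssim \sqrt{p}$ (by Stirling's formula) then gives $\| X \|_{L^p_\omega(\Omega)} \lesssim \sqrt{p}\, \bigl( \sum_n |c_n|^2 \bigr)^{1/2}$. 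The argument is entirely routine; the only points requiring a little care are the almost sure convergence of the random series $S_a$ — which legitimizes both the definition of $S_a$ and the interchange of limit and expectation in the moment generating function bound — and the elementary complex-to-real reduction, so I would not expect a genuine obstacle here.
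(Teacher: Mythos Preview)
Your argument is correct and is essentially the standard Chernoff/exponential-moment proof of this large deviation estimate. Note that the paper does not actually prove this lemma: it is quoted verbatim from Burq--Tzvetkov \cite[Lemma~3.1]{BT1} and stated without proof, so there is no ``paper's proof'' to compare against beyond the original reference, whose argument is the same as yours (reduce to real coefficients, bound the moment generating function using independence and the hypothesis, optimize the Markov bound, then integrate the tail to get the $L^p_\omega$ estimate).
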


We record the following large deviation estimates for the free evolution of the randomized initial data, following the presentation of the results from \cite{BT4} for the periodic setting. It follows immediately from the estimates below that the free evolution of the randomized initial data satisfies $L_{t,loc}^q L_x^r$ bounds almost surely.

\begin{lemma} \label{lem:large_deviation_estimates_free_evolution}
 Let $\sigma \geq 0$ and $f = (f_1, f_2) \in \cH^\sigma(\bR^3)$. For every $2 \leq q < \infty$, $2 \leq r < \infty$, and $\delta > 1 + \frac{1}{q}$, there exist constants $C \equiv C(q, r, \delta) > 0$ and  $c \equiv c(q, r, \delta) > 0$ such that for every $\lambda > 0$, 
 \begin{equation}
  \bP \Bigl( \bigl\{ \omega \in \Omega : \bigl\|  \langle t \rangle^{-\delta} u_f^\omega \bigr\|_{L^q_t L^r_x(\bR\times\bR^3)} > \lambda \bigr\} \Bigr) \leq C \exp \biggl( - c \frac{\lambda^2}{\|f\|_{\cH^0(\bR^3)}^2} \biggr)
 \end{equation}
 and
 \begin{equation}
  \bP \Bigl( \bigl\{ \omega \in \Omega : \bigl\|  \langle t \rangle^{-\delta} \tilde{u}_f^\omega \bigr\|_{L^q_t L^r_x(\bR\times\bR^3)} > \lambda \bigr\} \Bigr) \leq C \exp \biggl( - c \frac{\lambda^2}{\|f\|_{\cH^0(\bR^3)}^2} \biggr).
 \end{equation}
\end{lemma}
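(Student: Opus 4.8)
The plan is to reduce both estimates to Lemma~\ref{lem:large_deviation_estimate} via a Minkowski/interpolation argument, exactly as in the periodic case of \cite{BT4}. First I would dispose of the temporal weight: since $\delta > 1 + \frac{1}{q}$, we have $\langle t \rangle^{-\delta q} \in L^1_t(\bR)$ together with a little room to spare, so it suffices to prove, for each fixed $t$, a large deviation bound for the spatial norm $\|u_f^\omega(t)\|_{L^r_x}$ with a constant that is at most polynomially growing in $\langle t \rangle$ — the weight then absorbs this growth and makes the $t$-integral converge. Concretely, I would first bound the $L^p_\omega$ norm of $\|\langle t\rangle^{-\delta} u_f^\omega\|_{L^q_t L^r_x}$ for large $p$, then convert this moment bound into the stated exponential tail estimate by Chebyshev and optimization in $p$ (this is the standard passage, used throughout \cite{BT1}, \cite{BT4}).

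\textbf{The moment estimate.} For the moment bound, assume $p \geq \max(q,r)$. By Minkowski's inequality, moving the $L^p_\omega$ norm inside the $L^q_t$ and $L^r_x$ norms,
\[
 \bigl\| \| \langle t\rangle^{-\delta} u_f^\omega \|_{L^q_t L^r_x} \bigr\|_{L^p_\omega} \lesssim \bigl\| \langle t\rangle^{-\delta} \| u_f^\omega(t,x) \|_{L^p_\omega} \bigr\|_{L^q_t L^r_x}.
\]
Now $u_f^\omega(t,x) = \sum_{k} \bigl( h_k(\omega) \cos(t|\nabla|) P_k f_1 + l_k(\omega) \tfrac{\sin(t|\nabla|)}{|\nabla|} P_k f_2 \bigr)(x)$, which for each fixed $(t,x)$ is a sum of independent (after passing to real and imaginary parts via \eqref{equ:real_part}) mean-zero random variables weighted by the deterministic coefficients $c_k(t,x) = \bigl(\cos(t|\nabla|) P_k f_1\bigr)(x)$ and $d_k(t,x) = \bigl(\tfrac{\sin(t|\nabla|)}{|\nabla|} P_k f_2\bigr)(x)$. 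Applying Lemma~\ref{lem:large_deviation_estimate},
\[
 \| u_f^\omega(t,x) \|_{L^p_\omega} \lesssim \sqrt{p} \Bigl( \sum_k |c_k(t,x)|^2 + |d_k(t,x)|^2 \Bigr)^{1/2}.
\]
Substituting back and using the $L^r_x$ triangle inequality in the form $\|(\sum_k |g_k|^2)^{1/2}\|_{L^r_x} \leq (\sum_k \|g_k\|_{L^r_x}^2)^{1/2}$ (valid since $r \geq 2$), one is left with bounding $\sqrt{p}\,\|\langle t\rangle^{-\delta}\|_{L^q_t} \bigl( \sum_k \|\cos(t|\nabla|)P_k f_1\|_{L^\infty_t L^r_x}^2 + \|\tfrac{\sin(t|\nabla|)}{|\nabla|}P_k f_2\|_{L^\infty_t L^r_x}^2 \bigr)^{1/2}$ — here it is crucial that the evolution operators are bounded on the frequency-localized pieces. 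Using the unit-scale Bernstein inequality \eqref{equ:unit_scale_bernstein} to pass from $L^r_x$ down to $L^2_x$, the boundedness of $\cos(t|\nabla|)$ on $L^2_x$, and the fact that $\tfrac{\sin(t|\nabla|)}{|\nabla|}$ maps $L^2_x$ to $L^2_x$ on the support of $\psi(\cdot - k)$ uniformly in $k$ (for $|k| \gtrsim 1$ this is immediate; the finitely many $k$ near the origin are handled separately using $f_2 \in L^2_x$ and $\langle\nabla\rangle^{-1}$ being bounded there), one obtains $\sum_k \|P_k f_1\|_{L^2_x}^2 + \|P_k f_2\|_{L^2_x}^2 \lesssim \|f\|_{\cH^0}^2$ by almost-orthogonality. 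The finiteness of $\|\langle t\rangle^{-\delta}\|_{L^q_t}$ is exactly the condition $\delta q > 1$, i.e. $\delta > \frac{1}{q}$; the stronger hypothesis $\delta > 1 + \frac{1}{q}$ gives slack that one can also trade for the modified free evolution below. The upshot is $\bigl\| \|\langle t\rangle^{-\delta} u_f^\omega\|_{L^q_t L^r_x} \bigr\|_{L^p_\omega} \leq C \sqrt{p}\, \|f\|_{\cH^0}$, and then Chebyshev gives $\bP(\{\|\langle t\rangle^{-\delta} u_f^\omega\|_{L^q_t L^r_x} > \lambda\}) \leq (C\sqrt p\,\|f\|_{\cH^0}/\lambda)^p$; optimizing over $p \geq \max(q,r)$ yields the claimed $C\exp(-c\lambda^2/\|f\|_{\cH^0}^2)$ (for $\lambda$ above a threshold, and trivially below it by enlarging $C$).

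\textbf{The modified free evolution.} For $\tilde{u}_f^\omega$ the argument is identical: the operators $-\tfrac{|\nabla|}{\langle\nabla\rangle}\sin(t|\nabla|)$ and $\tfrac{\cos(t|\nabla|)}{\langle\nabla\rangle}$ are bounded on $L^2_x$ (the Fourier multipliers $\tfrac{|\nabla|}{\langle\nabla\rangle}$ and $\tfrac{1}{\langle\nabla\rangle}$ have symbols bounded by $1$), so the deterministic coefficients $c_k(t,x), d_k(t,x)$ satisfy the same $L^\infty_t L^2_x$ bounds after applying \eqref{equ:unit_scale_bernstein}, and every step goes through verbatim with $\|f\|_{\cH^0}$ again controlling the sum. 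Note the difference from the unmodified case: now \emph{both} components carry a factor $\langle\nabla\rangle^{-1}$, which is harmless.

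\textbf{Main obstacle.} The only genuinely delicate point is the uniformity in $k$ of the operator norm of $\tfrac{\sin(t|\nabla|)}{|\nabla|}$ (and $\tfrac{\cos(t|\nabla|)}{\langle\nabla\rangle}$) when localized to the cube $\{|\xi - k| \leq 1\}$: for the finitely many frequency blocks straddling or near the origin, $|\nabla|^{-1}$ is not $L^2_x$-bounded, but those contributions involve $P_k f_2 \in L^2_x$ with $|\nabla|^{-1}$ acting on a unit-frequency function supported away from where $\tfrac{\sin(t|\nabla|)}{|\nabla|}$ degenerates — or, even more simply, on those blocks one uses $\|\tfrac{\sin(t|\nabla|)}{|\nabla|}P_k f_2\|_{L^2_x} \lesssim \langle t\rangle \|P_k f_2\|_{L^2_x}$, and the extra $\langle t\rangle$ is absorbed by the weight since $\delta > 1 + \frac{1}{q}$. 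This is precisely where the $+1$ in the hypothesis on $\delta$ is used, and it mirrors the corresponding estimate in \cite{BT4}.
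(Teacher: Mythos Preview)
Your overall strategy matches the paper's exactly: a moment bound via Minkowski, Lemma~\ref{lem:large_deviation_estimate}, and the unit-scale Bernstein inequality, followed by Chebyshev and optimization in $p$. There is, however, a genuine gap in the treatment of the $f_2$ component of $u_f^\omega$. You assert that $\sum_k \|P_k f_2\|_{L^2_x}^2 \lesssim \|f\|_{\cH^0}^2$, but recall $\cH^0 = L^2_x \times H^{-1}_x$, so the right-hand side only controls $\|f_2\|_{H^{-1}_x}^2$, whereas the left-hand side is $\sim \|f_2\|_{L^2_x}^2$; these are not comparable. (Relatedly, you write ``$f_2 \in L^2_x$'', which is not part of the hypothesis.) The problem is that for $|k| \gg 1$ you only use that $\tfrac{\sin(t|\nabla|)}{|\nabla|}$ is \emph{bounded} on the $k$-th frequency block, discarding the $\langle k\rangle^{-1}$ decay the operator actually provides there.

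The paper's fix is to use the uniform pointwise bound $\bigl|\tfrac{\sin(t|\xi|)}{|\xi|}\bigr| \lesssim \langle t\rangle \langle\xi\rangle^{-1}$, valid for \emph{all} $\xi \in \bR^3$, which yields
\[
\Bigl\|\tfrac{\sin(t|\nabla|)}{|\nabla|} P_k f_2\Bigr\|_{L^2_x} \lesssim \langle t\rangle\, \bigl\|\langle\nabla\rangle^{-1} P_k f_2\bigr\|_{L^2_x}
\]
for every $k$; summing in $k$ then gives $\|f_2\|_{H^{-1}_x}^2$ as required, and the global $\langle t\rangle$ factor is absorbed by the weight $\langle t\rangle^{-\delta}$ precisely because $\delta > 1 + \tfrac{1}{q}$. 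In other words, the extra $+1$ in the hypothesis on $\delta$ is used uniformly across all frequencies, not only for the finitely many low-frequency blocks as your ``Main obstacle'' paragraph suggests. Your argument for $\tilde{u}_f^\omega$, by contrast, is fine as written: there the propagator $\tfrac{\cos(t|\nabla|)}{\langle\nabla\rangle}$ already carries the $\langle\nabla\rangle^{-1}$ explicitly, so the sum lands in $H^{-1}_x$ without any $\langle t\rangle$ loss.
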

\begin{proof}
 We adapt the proofs of Proposition~A.1, Corollary~A.2, and Corollary~A.4 in \cite{BT4} to our setting. In view of Lemma 2.5 in \cite{LM}, it suffices to prove for any $p \geq q, r$ that
 \[
  \bigl\| \langle t \rangle^{-\delta} u_f^\omega \bigr\|_{L^p_\omega(\Omega; L^q_t L^r_x(\bR\times\bR^3))} \lesssim \sqrt{p} \|f\|_{\cH^0(\bR^3)}
 \]
 and similarly for $\tilde{u}_f^\omega$. To this end we can consider the components of $u_f^\omega$ separately. We will only show
 \begin{equation} \label{equ:suffices}
  \biggl\| \langle t \rangle^{-\delta} \frac{\sin(t|\nabla|)}{|\nabla|} f_2^\omega \biggr\|_{L^p_\omega(\Omega; L^q_t L^r_x(\bR\times\bR^3))} \lesssim \sqrt{p} \|f_2\|_{H^{-1}_x(\bR^3)},
 \end{equation}
since the estimates for the other components are slightly easier. Using Lemma~\ref{lem:large_deviation_estimate} and the unit-scale Bernstein estimate \eqref{equ:unit_scale_bernstein}, we have for any $p \geq q,r$ that
 \begin{align*}
  &\biggl\| \langle t \rangle^{-\delta} \frac{\sin(t|\nabla|)}{|\nabla|} f_2^\omega \biggr\|_{L^p_\omega(\Omega; L^q_t L^r_x(\bR\times\bR^3))} \\
  &= \biggl\| \langle t \rangle^{-\delta} \sum_{k \in \bZ^3} l_k(\omega) \frac{\sin(t|\nabla|)}{|\nabla|} P_k f_2 \biggr\|_{L^p_\omega(\Omega; L^q_t L^r_x(\bR\times\bR^3))} \\
  &\lesssim \sqrt{p} \biggl\| \langle t \rangle^{-\delta} \Bigl( \sum_{k \in \bZ^3} \Bigl| \frac{\sin(t|\nabla|)}{|\nabla|} P_k f_2(x) \Bigr|^2 \Bigr)^{1/2} \biggr\|_{L^q_t L^r_x(\bR\times\bR^3)} \\
  &\lesssim \sqrt{p} \biggl\| \langle t \rangle^{-\delta} \Bigl( \sum_{k \in \bZ^3} \Bigl\| \frac{\sin(t|\nabla|)}{|\nabla|} P_k f_2 \Bigr\|_{L^r_x(\bR^3)}^2 \Bigr)^{1/2} \biggr\|_{L^q_t(\bR)} \\
  &\lesssim \sqrt{p} \biggl\| \langle t \rangle^{-\delta} \Bigl( \sum_{k \in \bZ^3} \Bigl\| \frac{\sin(t|\nabla|)}{|\nabla|} P_k f_2 \Bigr\|_{L^2_x(\bR^3)}^2 \Bigr)^{1/2} \biggr\|_{L^q_t(\bR)} \\
  &\lesssim \sqrt{p} \biggl\| \langle t \rangle^{-(\delta-1)} \Bigl( \sum_{k \in \bZ^3} \bigl\| \langle \nabla \rangle^{-1} P_k f_2 \bigr\|_{L^2_x(\bR^3)}^2 \Bigr)^{1/2} \biggr\|_{L^q_t(\bR)} \\
  &\lesssim \sqrt{p} \bigl\| \langle t \rangle^{-(\delta -1)} \bigr\|_{L^q_t(\bR)}  \Bigl( \sum_{k \in \bZ^3} \bigl\| \langle \nabla \rangle^{-1} P_k f_2 \bigr\|_{L^2_x(\bR^3)}^2 \Bigr)^{1/2}\\
  &\lesssim \sqrt{p} \|f_2\|_{H^{-1}_x(\bR^3)}. \qedhere
 \end{align*}
\end{proof}

The following corollary is the formulation of the large deviation estimates that we will use in the proof of our main result.

\begin{corollary} \label{cor:large_deviation_estimates_free_evolution_with_infty}
 Let $\sigma > 0$ and $f = (f_1, f_2) \in \cH^\sigma(\bR^3)$. For $2 \leq q < \infty$, $2 \leq r \leq \infty$, $\delta > 1 + \frac{1}{q}$, and $0 < \varepsilon \leq \sigma$, there exist constants $C \equiv C(q, r, \delta, \varepsilon) > 0$ and $c \equiv c(q,r,\delta,\varepsilon) > 0$ such that for every $\lambda > 0$,
 \begin{equation}
  \bP \Bigl( \bigl\{ \omega \in \Omega : \bigl\|  \langle t \rangle^{-\delta} u_f^\omega \bigr\|_{L^q_t L^r_x(\bR\times\bR^3)} > \lambda \bigr\} \Bigr) \leq C \exp \biggl( - c \frac{\lambda^2}{\|f\|_{\cH^\varepsilon(\bR^3)}^2} \biggr)
 \end{equation}
 and
 \begin{equation}
  \bP \Bigl( \bigl\{ \omega \in \Omega : \bigl\|  \langle t \rangle^{-\delta} \tilde{u}_f^\omega \bigr\|_{L^q_t L^r_x(\bR\times\bR^3)} > \lambda \bigr\} \Bigr) \leq C \exp \biggl( - c \frac{\lambda^2}{\|f\|_{\cH^\varepsilon(\bR^3)}^2} \biggr).
 \end{equation} 
\end{corollary}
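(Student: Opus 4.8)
The plan is to reduce both estimates to Lemma~\ref{lem:large_deviation_estimates_free_evolution}, using two elementary observations. First, the bound involving the $\cH^\varepsilon$-norm is \emph{weaker} than the $\cH^0$-bound already proved in Lemma~\ref{lem:large_deviation_estimates_free_evolution}, which therefore settles the range $2 \le r < \infty$ immediately. Second, the endpoint $r = \infty$ can be reached by spending the $\varepsilon \le \sigma$ derivatives of spare Sobolev regularity through a Sobolev embedding, after commuting $\langle \nabla \rangle^\varepsilon$ past the free evolution.

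For $2 \le r < \infty$: since $\varepsilon > 0$ we have $\|f\|_{\cH^0(\bR^3)} \le \|f\|_{\cH^\varepsilon(\bR^3)}$, hence $\exp\bigl(-c\lambda^2/\|f\|_{\cH^0(\bR^3)}^2\bigr) \le \exp\bigl(-c\lambda^2/\|f\|_{\cH^\varepsilon(\bR^3)}^2\bigr)$, and the asserted estimates for both $u_f^\omega$ and $\tilde u_f^\omega$ follow verbatim from Lemma~\ref{lem:large_deviation_estimates_free_evolution}.

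For $r = \infty$: fix a finite exponent $\tilde r > \max\{2, q, 3/\varepsilon\}$, so that the Sobolev embedding gives $\|w\|_{L^\infty_x(\bR^3)} \lesssim \|\langle \nabla \rangle^\varepsilon w\|_{L^{\tilde r}_x(\bR^3)}$. Since $\langle \nabla \rangle^\varepsilon$ is a Fourier multiplier, it commutes with the projections $P_k$ and with the propagators $\cos(t|\nabla|)$, $\frac{\sin(t|\nabla|)}{|\nabla|}$, $\frac{|\nabla|}{\langle \nabla \rangle}\sin(t|\nabla|)$, $\frac{\cos(t|\nabla|)}{\langle \nabla \rangle}$, so that $\langle \nabla \rangle^\varepsilon u_f^\omega = u_g^\omega$ and $\langle \nabla \rangle^\varepsilon \tilde u_f^\omega = \tilde u_g^\omega$ with $g := (\langle \nabla \rangle^\varepsilon f_1, \langle \nabla \rangle^\varepsilon f_2) \in \cH^{\sigma-\varepsilon}(\bR^3) \subset \cH^0(\bR^3)$ and $\|g\|_{\cH^0(\bR^3)} = \|f\|_{\cH^\varepsilon(\bR^3)}$. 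Applying the embedding pointwise in $t$ and then taking the weighted $L^q_t$ norm, I obtain
\[
 \bigl\| \langle t\rangle^{-\delta} u_f^\omega \bigr\|_{L^q_t L^\infty_x(\bR\times\bR^3)} \lesssim \bigl\| \langle t\rangle^{-\delta} u_g^\omega \bigr\|_{L^q_t L^{\tilde r}_x(\bR\times\bR^3)},
\]
and likewise with $u$ replaced by $\tilde u$. Feeding these into Lemma~\ref{lem:large_deviation_estimates_free_evolution} with exponents $(q,\tilde r)$, weight parameter $\delta > 1 + \tfrac1q$, and datum $g \in \cH^0(\bR^3)$ then yields the claimed sub-Gaussian bounds with $\|f\|_{\cH^\varepsilon(\bR^3)}$ in the denominator, after renaming $c$ and $C$ to absorb the fixed Sobolev constant.

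There is no genuine obstacle here; the corollary is bookkeeping around Lemma~\ref{lem:large_deviation_estimates_free_evolution}. The single point requiring care is that the derivatives consumed by the embedding into $L^\infty_x$ are in fact available --- one loses only $\varepsilon$ derivatives and $\varepsilon \le \sigma$ --- which is exactly the hypothesis imposed in the statement.
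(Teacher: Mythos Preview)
Your proof is correct and follows essentially the same approach as the paper's: for $r<\infty$ the estimate is a direct weakening of Lemma~\ref{lem:large_deviation_estimates_free_evolution}, and for $r=\infty$ one uses the Sobolev embedding $W^{\varepsilon,\tilde r}_x(\bR^3)\hookrightarrow L^\infty_x(\bR^3)$ with $\tilde r>3/\varepsilon$ to reduce to the finite-$r$ case applied to $g=\langle\nabla\rangle^\varepsilon f$. The paper's proof is a one-line reference to exactly this mechanism; you have simply spelled it out.
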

\begin{proof}
 As in \cite[Corollary A.5]{BT4}, the assertion follows immediately from Lemma~\ref{lem:large_deviation_estimates_free_evolution} and the Sobolev embedding $W^{s, r}_x(\bR^3) \hookrightarrow L^\infty_x(\bR^3)$ for any $r > 1$ and $s > \frac{3}{r}$. 
\end{proof}

Finally, we need the following large deviation estimate in order to conclude that the energy functional used in the proof of Proposition~\ref{prop:energy_bounds} is well-defined for all times.
\begin{lemma} \label{lem:large_deviation_estimate_p+1}
Let $\sigma > 0$ and $f = (f_1, f_2) \in \cH^\sigma(\bR^3)$. For $2 \leq r < \infty$, $\delta > 1$, and $0 < \varepsilon \leq \sigma$, there exist constants $C \equiv C(r, \delta, \varepsilon) > 0$ and $c \equiv c(r, \delta, \varepsilon) > 0$ such that for every $\lambda > 0$,
 \[
  \bP \Bigl( \bigl\{ \omega \in \Omega : \bigl\| \langle t \rangle^{-\delta} u_f^\omega \bigr\|_{L_t^\infty L^r_x(\bR \times \bR^3)} > \lambda \bigr\} \Bigr) \leq C \exp \biggl( - c \frac{\lambda^2}{\|f\|_{\cH^\varepsilon(\bR^3)}^2} \biggr).
 \]
\end{lemma}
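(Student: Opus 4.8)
The plan is to follow the strategy used for Lemma~\ref{lem:large_deviation_estimates_free_evolution} and Corollary~\ref{cor:large_deviation_estimates_free_evolution_with_infty}, but to insert a Littlewood--Paley decomposition in order to handle the $L^\infty_t$ norm, which neither of those results directly provides. By the reduction of tail bounds to moment bounds (\cite[Lemma 2.5]{LM}), it suffices to prove that for all sufficiently large $p$,
\[
 \bigl\| \langle t \rangle^{-\delta} u_f^\omega \bigr\|_{L^p_\omega(\Omega; L^\infty_t L^r_x(\bR\times\bR^3))} \lesssim \sqrt{p}\, \|f\|_{\cH^\varepsilon(\bR^3)}.
\]
First I would fix an auxiliary exponent $q$ with $q \geq 2$, $\frac{1}{q} < \varepsilon$, and $\frac{1}{q} < \delta - 1$, which is possible since $\varepsilon > 0$ and $\delta > 1$; all implicit constants are then allowed to depend on $q$, $r$, $\delta$, $\varepsilon$.

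The key step is a weighted Bernstein estimate in time at a fixed dyadic frequency $N \geq 1$. Since $P_N u_f^\omega = \tfrac{1}{2}\bigl(e^{it|\nabla|} + e^{-it|\nabla|}\bigr) P_N f_1^\omega + \tfrac{1}{2i}|\nabla|^{-1}\bigl(e^{it|\nabla|} - e^{-it|\nabla|}\bigr) P_N f_2^\omega$ and $P_N$ localizes to spatial frequencies $|\xi| \lesssim N$, the $L^r_x(\bR^3)$-valued map $t \mapsto P_N u_f^\omega(t)$ has temporal Fourier support in $\{|\tau| \lesssim N\}$, hence equals its own convolution $P_N u_f^\omega *_t \check{\phi}_N$ with a kernel satisfying $|\check{\phi}_N(t)| \lesssim_M N \langle Nt \rangle^{-M}$ for every $M$. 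Writing out this convolution, using Peetre's inequality $\langle t \rangle^{-\delta} \lesssim \langle s \rangle^{-\delta} \langle t-s \rangle^{\delta}$ and Hölder's inequality in $s$, and noting that $\| \langle \cdot \rangle^{\delta} \check{\phi}_N \|_{L^{q'}_t(\bR)} \lesssim N^{1/q}$ once $M$ is chosen large enough (by rescaling), I expect to obtain
\[
 \bigl\| \langle t \rangle^{-\delta} P_N u_f^\omega \bigr\|_{L^\infty_t L^r_x(\bR\times\bR^3)} \lesssim N^{1/q} \bigl\| \langle t \rangle^{-\delta} P_N u_f^\omega \bigr\|_{L^q_t L^r_x(\bR\times\bR^3)}.
\]

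Next I would observe that $P_N$ commutes with the half-wave propagators and with the unit-scale projections used in the randomization, so $P_N u_f^\omega = u^\omega_{P_N f}$ with $P_N f = (P_N f_1, P_N f_2) \in \cH^0(\bR^3)$. Lemma~\ref{lem:large_deviation_estimates_free_evolution}, in the moment form that appears in its proof, then yields, for $p \geq \max(q,r)$,
\[
 \bigl\| \langle t \rangle^{-\delta} P_N u_f^\omega \bigr\|_{L^p_\omega(\Omega; L^q_t L^r_x)} \lesssim \sqrt{p}\, \|P_N f\|_{\cH^0(\bR^3)}.
\]
Combining the last two displays, applying the triangle inequality in $L^p_\omega(\Omega; L^\infty_t L^r_x)$ to the decomposition $u_f^\omega = \sum_{N \text{ dyadic}} P_N u_f^\omega$ (working with partial sums and passing to the limit by Fatou), and then using Cauchy--Schwarz in $N$ together with $\|P_N f\|_{\cH^0} \sim \langle N \rangle^{-\varepsilon} \|P_N f\|_{\cH^\varepsilon}$ and the almost orthogonality of the Littlewood--Paley pieces, I would arrive at
\[
 \bigl\| \langle t \rangle^{-\delta} u_f^\omega \bigr\|_{L^p_\omega(\Omega; L^\infty_t L^r_x)} \lesssim \sqrt{p} \sum_{N} N^{1/q} \|P_N f\|_{\cH^0(\bR^3)} \lesssim \sqrt{p} \Bigl( \sum_N N^{\frac{2}{q}-2\varepsilon} \Bigr)^{1/2} \|f\|_{\cH^\varepsilon(\bR^3)} \lesssim \sqrt{p}\, \|f\|_{\cH^\varepsilon(\bR^3)},
\]
since the geometric-type sum over dyadic $N$ converges when $\frac{1}{q} < \varepsilon$. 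This is the required moment bound, and the lemma follows.

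The main obstacle is the weighted Bernstein estimate in time in the second step: the inhomogeneous weight $\langle t \rangle^{-\delta}$ spoils the exact band-limitation of $P_N u_f^\omega$, but Peetre's inequality trades it for a factor $\langle t-s \rangle^{\delta}$ that is absorbed by the rapid decay of the kernel $\check{\phi}_N$, at the cost of a loss $N^{1/q}$ in frequency. This loss is harmless precisely because $\delta > 1$ allows $q$ to be taken arbitrarily large and because the extra $\varepsilon > 0$ spatial derivatives make the dyadic sum converge; it is here, rather than in the application of Lemma~\ref{lem:large_deviation_estimates_free_evolution} (which only sees $\cH^0$-regularity of the data), that the hypothesis $\sigma > 0$ enters.
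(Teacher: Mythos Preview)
Your argument is correct, but it takes a different route from the paper. The paper's proof is a three-line application of one-dimensional Sobolev embedding in time: choosing $q$ large with $\varepsilon > \tfrac{1}{q}$ and $\delta > 1+\tfrac{1}{q}$, one writes
\[
 \bigl\| \langle t \rangle^{-\delta} u_f^\omega \bigr\|_{L^\infty_t L^r_x} \lesssim \bigl\| \langle \partial_t \rangle^{\varepsilon} \langle t \rangle^{-\delta} u_f^\omega \bigr\|_{L^q_t L^r_x} \lesssim \bigl\| \langle t \rangle^{-\delta} \langle \partial_t \rangle^{\varepsilon} u_f^\omega \bigr\|_{L^q_t L^r_x} \lesssim \bigl\| \langle t \rangle^{-\delta} \langle \nabla_x \rangle^{\varepsilon} u_f^\omega \bigr\|_{L^q_t L^r_x},
\]
the last step using that for the free wave evolution the temporal and spatial frequencies coincide, and then invokes Lemma~\ref{lem:large_deviation_estimates_free_evolution}. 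Your approach exploits the same underlying fact---that $P_N u_f^\omega$ is band-limited in time to $|\tau|\lesssim N$---but unpacks it explicitly via a dyadic decomposition and a weighted Bernstein estimate rather than through Sobolev embedding. The paper's version is shorter but hides two small technical points (commuting $\langle \partial_t \rangle^\varepsilon$ past the weight $\langle t\rangle^{-\delta}$, and trading $\langle\partial_t\rangle^\varepsilon$ for $\langle\nabla_x\rangle^\varepsilon$ on the free evolution); your version makes the mechanism transparent and avoids those commutator issues, at the price of a longer write-up and the need to sum the dyadic pieces by hand.
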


\begin{proof}
We adapt the proof of Lemma~2.2 in \cite{BTT2} to our setting. Applying one-dimensional Sobolev embedding in time with $q \geq 2$ sufficiently large such that $\varepsilon > \frac{1}{q}$ and $\delta > 1+ \frac{1}{q}$, we obtain
\begin{align*}
 \bigl\| \langle t \rangle^{-\delta} u_f^\omega \bigr\|_{L^\infty_t L^r_x(\bR\times\bR^3)} &\lesssim \bigl\| \langle \partial_t \rangle^{\varepsilon} \langle t \rangle^{-\delta} u_f^\omega \bigr\|_{L^q_t L^r_x(\bR\times\bR^3)} \\
 &\lesssim \bigl\| \langle t \rangle^{-\delta} \langle \partial_t \rangle^{\varepsilon} u_f^\omega \bigr\|_{L^q_t L^r_x(\bR\times\bR^3)} \\
 &\lesssim \bigl\| \langle t \rangle^{-\delta} \langle \nabla_x \rangle^{\varepsilon} u_f^\omega \bigr\|_{L^q_t L^r_x(\bR\times\bR^3)} 
\end{align*}
and the claim now follows from Lemma~\ref{lem:large_deviation_estimates_free_evolution}.
\end{proof}

\section{Proof of Theorem~\ref{main_theorem}} \label{sec:proof_main_theorem}
\setcounter{equation}{0}

In this section we first record a deterministic local well-posedness result for the forced nonlinear wave equation that is associated with the random data problem~\eqref{equ:nlw_main_theorem}. Next we derive the main probabilistic energy bounds for the nonlinear components of the solutions to~\eqref{equ:nlw_main_theorem}. The proof of Theorem~\ref{main_theorem} is then an immediate consequence of these two results.

\begin{lemma} \label{lem:lwp}
 Let $3 < p < 5$, $T' > 0$, and let $(v_1, v_2) \in \cH^{1}(\bR^3)$ and $F \in L^{\frac{2p}{p-3}}_t L^{2p}_x([0,T']\times\bR^3)$ be such that
 \begin{equation}
  \|(v_1, v_2)\|_{ \cH^{1}} +  \bigl\| F \bigr\|_{L^{\frac{2p}{p-3}}_t L^{2p}_x([0,T']\times\bR^3)} \leq \lambda
 \end{equation}
 for some $\lambda > 0$. Then there exists $0 < T \leq T'$ with $T \sim \lambda^{-\frac{2(p-1)}{5-p}}$ and a unique solution
 \[
  (v, \partial_t v) \in  C\bigl([0,T]; H^1_x(\bR^3)\bigr) \cap L^{\frac{2p}{p-3}}_t L^{2 p}_x\bigl([0,T]\times\bR^3\bigr) \times C\bigl([0,T]; L^2_x(\bR^3)\bigr)
 \]
 to the forced nonlinear wave equation
 \begin{equation} \label{equ:nlw_lwp}
  \left\{ \begin{aligned}
   -\partial_t^2 v + \Delta v &= |F+v|^{p-1} (F + v) \text{ on } [0,T] \times \bR^3, \\
   (v, \partial_t v)|_{t=0} &= (v_1,v_2).
  \end{aligned} \right.
 \end{equation}
\end{lemma}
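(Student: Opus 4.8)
\emph{Proof proposal.} The plan is to solve \eqref{equ:nlw_lwp} by a contraction mapping argument in the energy space, treating the nonlinearity perturbatively via the Strichartz estimates of Proposition~\ref{prop:strichartz_estimates}. Set $q := \frac{2p}{p-3}$ and $r := 2p$. One checks directly that $(q,r)$ is wave-admissible and satisfies the first scaling identity in \eqref{equ:strichartz_scaling_identities} at regularity $\gamma = 1$, and that $(\tilde q, \tilde r) = (\infty, 2)$ is wave-admissible with dual exponents $(\tilde q', \tilde r') = (1, 2)$ satisfying the second identity in \eqref{equ:strichartz_scaling_identities} at $\gamma = 1$. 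Introduce the complete metric space
\[
 B_R := \Bigl\{ v : \ \|v\|_{X_T} := \|v\|_{L^\infty_t H^1_x([0,T]\times\bR^3)} + \|\partial_t v\|_{L^\infty_t L^2_x([0,T]\times\bR^3)} + \|v\|_{L^q_t L^r_x([0,T]\times\bR^3)} \leq R \Bigr\}
\]
with the metric induced by $\|\cdot\|_{X_T}$, and let $\Phi(v)$ denote the solution of the linear wave equation with data $(v_1, v_2)$ and forcing term $|F+v|^{p-1}(F+v)$, defined through Duhamel's formula. Proposition~\ref{prop:strichartz_estimates} at regularity $\gamma = 1$, together with the elementary inequality $\|\Phi(v)\|_{L^\infty_t L^2_x} \lesssim \|v_1\|_{L^2_x} + T \|\partial_t \Phi(v)\|_{L^\infty_t L^2_x}$ used to recover the low-frequency part of the $H^1_x$-norm, yields
\[
 \|\Phi(v)\|_{X_T} \lesssim \|(v_1,v_2)\|_{\cH^1} + \bigl\| |F+v|^{p-1}(F+v) \bigr\|_{L^1_t L^2_x([0,T]\times\bR^3)}.
\]

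The crucial estimate is the one for the nonlinear term. Since $\bigl\| |F+v|^{p-1}(F+v) \bigr\|_{L^2_x} = \|F+v\|_{L^{2p}_x}^p$ and since $p \leq q$ precisely because $p < 5$, Hölder's inequality in time on $[0,T]$ gives
\[
 \bigl\| |F+v|^{p-1}(F+v) \bigr\|_{L^1_t L^2_x([0,T]\times\bR^3)} = \|F+v\|_{L^p_t L^{2p}_x([0,T]\times\bR^3)}^p \lesssim T^{\frac{5-p}{2}} \|F+v\|_{L^q_t L^r_x([0,T]\times\bR^3)}^p \lesssim T^{\frac{5-p}{2}} (\lambda^p + R^p).
\]
Choosing $R \sim \lambda$ (with implicit constant twice that of the Strichartz estimate) and then $0 < T \leq T'$ with $T \sim \lambda^{-\frac{2(p-1)}{5-p}}$ so that the factor $T^{\frac{5-p}{2}} R^{p-1}$ is small, we conclude that $\Phi$ maps $B_R$ into itself. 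For the contraction property one uses the pointwise bound $\bigl| |a|^{p-1}a - |b|^{p-1}b \bigr| \lesssim \bigl( |a|^{p-1} + |b|^{p-1} \bigr) |a-b|$, valid since $p > 1$, followed by Hölder's inequality in $x$ (with $\frac{p-1}{2p} + \frac{1}{2p} = \frac12$) and in $t$; the total loss in time integrability is again a factor $T^{\frac{5-p}{2}}$, so that for $v, w \in B_R$,
\[
 \|\Phi(v) - \Phi(w)\|_{X_T} \lesssim T^{\frac{5-p}{2}} R^{p-1} \|v - w\|_{X_T} \leq \tfrac12 \|v - w\|_{X_T}
\]
for the above choice of $T$. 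The contraction mapping principle then produces a unique fixed point $v \in B_R$, which solves \eqref{equ:nlw_lwp}; the continuity statement $(v, \partial_t v) \in C([0,T]; H^1_x) \times C([0,T]; L^2_x)$ follows from the Duhamel representation together with the Strichartz bounds in the usual way. Uniqueness in the full solution class, without the a priori bound $\|v\|_{X_T} \leq R$, follows from a standard continuation argument: two solutions agree on a maximal closed subinterval of $[0,T]$ on which both remain in a common ball, and this subinterval must be all of $[0,T]$.

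I expect the only delicate point to be the bookkeeping of the time exponents, namely verifying that every application of Hölder's inequality in time produces a genuine positive power of $T$. This holds precisely in the energy-subcritical range $3 < p < 5$, with the gain $T^{\frac{5-p}{2}}$ degenerating at the energy-critical exponent $p = 5$; this is exactly what makes the lifespan depend only on the size $\lambda$ of the data. A secondary, routine point is that the Strichartz estimate at $\gamma = 1$ directly controls only the homogeneous norms $\|v\|_{L^\infty_t \dot H^1_x}$ and $\|\partial_t v\|_{L^\infty_t L^2_x}$, so that the inhomogeneous $L^2_x$-part of $\|v\|_{H^1_x}$ must be recovered separately as indicated above.
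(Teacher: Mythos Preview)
Your proof is correct and follows essentially the same route as the paper: a contraction mapping argument based on the $\gamma=1$ Strichartz estimate with $(q,r)=\bigl(\tfrac{2p}{p-3},2p\bigr)$ and H\"older in time to extract the factor $T^{(5-p)/2}$, leading to the same lifespan $T\sim\lambda^{-2(p-1)/(5-p)}$. The only cosmetic difference is that the paper runs the fixed point in the Strichartz norm $L^{q}_t L^{2p}_x$ alone and checks $(v,\partial_t v)\in C([0,T];\cH^1)$ afterward, whereas you contract in the full $X_T$-norm and therefore need the separate recovery of the $L^2_x$-piece.
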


\begin{proof}
We introduce the notation 
\[
 q(p) = \frac{2 p}{p - 3}, \quad \alpha(p) = \frac{5 - p}{2}.
\]
Let $0 < T \leq T'$ to be fixed later. Then we define for $v \in L^{q(p)}_t L^{2p}_x([0,T]\times\bR^3)$ and $t \in [0,T]$,
\begin{equation}
 \Phi(v)(t) = \cos(t|\nabla|) v_1 + \frac{\sin(t|\nabla|)}{|\nabla|} v_2 - \int_0^t \frac{\sin((t-s) |\nabla|)}{|\nabla|} |F + v|^{p -1} (F + v)(s) \, ds.
\end{equation}
We note that the exponent pair $(q(p), 2 p)$ is Strichartz-admissible at regularity $\gamma =1$. Thus, by the Strichartz estimates \eqref{equ:strichartz_estimates} we obtain that
\begin{equation} \label{equ:Phi_estimate1}
 \begin{aligned}  
  \|\Phi(v)\|_{L^{q(p)}_T L^{2 p}_x} &\leq C \Bigl( \|(v_1, v_2)\|_{\dot{H}^1_x \times L^2_x} + \bigl\| |F + v|^{p -1} (F + v) \bigr\|_{L^1_T L^2_x} \Bigr) \\
  &\leq C \|(v_1, v_2)\|_{\dot{H}^1_x \times L^2_x} + C T^{\alpha(p)} \Bigl( \bigl\| F \bigr\|^{p}_{L^{q(p)}_T L^{2 p}_x } + \|v\|^{p}_{L^{q(p)}_T L^{2 p}_x} \Bigr).
 \end{aligned}
\end{equation}
Similarly, we find for $v, \widetilde{v} \in L^{q(p)}_t L^{2p}_x([0,T]\times\bR^3)$ that
\begin{equation} \label{equ:Phi_estimate2}
 \begin{aligned}
   \|\Phi(v) - \Phi(\widetilde{v})\|_{L^{q(p)}_T L^{2 p}_x} &\leq C T^{\alpha(p)} \|v - \widetilde{v}\|_{L^{q(p)}_T L^{2 p}_x } \times \\
   &\quad \quad \quad \times \Bigl(\bigl\| F \bigr\|^{p-1}_{L^{q(p)}_T L^{2 p}_x } + \|v\|^{p-1}_{L^{q(p)}_T L^{2 p}_x } + \|\widetilde{v}\|^{p-1}_{L^{q(p)}_T L^{2 p}_x } \Bigr).
 \end{aligned}
\end{equation}
From \eqref{equ:Phi_estimate1} and \eqref{equ:Phi_estimate2} we conclude that by choosing
\begin{equation}
 T \sim \lambda^{- \frac{(p - 1)}{\alpha(p)}},
\end{equation}
the ball
\[
  B := \bigl\{ v \in L^{q(p)}_t L^{2p}_x([0,T]\times\bR^3) : \|v\|_{L^{q(p)}_t L^{2p}_x([0,T]\times\bR^3)}  \leq 2 C \lambda \bigr\}
\]
is mapped into itself by $\Phi$ and $\Phi$ is a contraction on $B$ with respect to the $L^{q(p)}_T L^{2 p}_x$ norm. Hence, $\Phi$ has a unique fixed point $v$ in $B$ and we easily verify that it satisfies
\[
 (v, \partial_t v) \in C([0,T]; H^1_x(\bR^3) \times L^2_x(\bR^3)),
\]
which finishes the proof.
\end{proof}

\medskip

We now present the proof of the key probabilistic energy bounds.

\begin{proposition} \label{prop:energy_bounds}
 Let $3 < p < 5$, $\frac{p-1}{p+1} < s < 1$ and $0 < \delta < \frac{p+1}{p-1} s -1$. For real-valued $f = (f_1, f_2) \in \cH^s(\bR^3)$, let $f^\omega$ be the associated randomization as defined in \eqref{equ:bighsrandomization} and let $u_f^\omega$, $\tilde{u}_f^\omega$ be the free evolutions \eqref{equ:free_evolution}, respectively \eqref{equ:modified_free_evolution}. Then there exists $\Sigma \subset \Omega$ with $\bP(\Sigma) = 1$ such that for all $\omega \in \Sigma$,
 \begin{align} \label{equ:L_t_loc_almost_surely_energy_bounds}
  \begin{aligned}
   & u_f^\omega \in L_{t,loc}^\infty L^{p+1}_x(\bR \times \bR^3), \\
   & \tilde{u}_f^\omega \in L^{p+1}_{t,loc} L^{p+1}_x(\bR\times\bR^3),
  \end{aligned}
  &&
  \begin{aligned}
   & \langle \nabla \rangle^{\frac{p-1}{2} (1-s+\delta+)} u_f^\omega \in L^4_{t,loc} L^2_x(\bR\times\bR^3), \\
   & \langle \nabla \rangle^{s-\delta} \tilde{u}_f^\omega \in L^2_{t,loc} L^\infty_x(\bR\times\bR^3).
  \end{aligned}
 \end{align}
 For fixed $\omega \in \Sigma$, let $T > 0$ be arbitrary and let  
 \begin{equation} \label{equ:energy_bounds_solution}
  (v^\omega, \partial_t v^\omega) \in C\bigl([0,T]; H^1_x(\bR^3)\bigr) \cap L^{\frac{2p}{p-3}}_t L^{2p}_x\bigl([0,T]\times\bR^3\bigr) \times C\bigl([0,T]; L^2_x(\bR^3)\bigr)
 \end{equation}
 be a solution to
 \begin{equation} \label{equ:nlw_probabilistic_energy_bounds}
  \left\{ \begin{aligned}
   -\partial_t^2 v^\omega + \Delta v^\omega &= |u_f^\omega + v^\omega|^{p-1} (u_f^\omega + v^\omega) \text{ on } [0,T] \times \bR^3, \\
   (v^\omega, \partial_t v^\omega)|_{t=0} &= (v_1, v_2)
  \end{aligned} \right.
 \end{equation} 
 for some $(v_1, v_2) \in \cH^1(\bR^3)$. Then there exists an absolute constant $C > 0$ such that
 \begin{equation}
  \sup_{t \in [0,T]} \bigl\| (v^\omega(t), \partial_t v^\omega(t)) \bigr\|_{\cH^1}^2 \leq C \bigl( \|(v_1, v_2)\|_{\cH^1}^2 + \|v_1\|_{L^{p+1}_x}^{p+1} + \|f_1^\omega\|_{L^{p+1}_x}^{p+1} + B^\omega(T) \bigr) e^{C (T + A^\omega(T))},
 \end{equation}
 where
 \begin{align*}
  A^\omega(T) &= \bigl\| \langle \nabla \rangle^{s-\delta} \tilde{u}_f^\omega \bigr\|_{L^1_t L^\infty_x([0,T]\times\bR^3)}, \\
  B^\omega(T) &= \bigl\| \tilde{u}_f^\omega \bigr\|_{L^{p+1}_t L^{p+1}_x([0,T]\times\bR^3)}^{p+1} + \bigl\| \langle \nabla \rangle^{s-\delta} \tilde{u}_f^\omega \bigr\|_{L^2_t L^\infty_x([0,T]\times\bR^3)}^2 \\
  &\quad + \bigl\| \langle \nabla \rangle^{\frac{p-1}{2} (1-s+\delta+)} u_f^\omega \bigr\|_{L^4_t L^2_x([0,T]\times\bR^3)}^4.
 \end{align*}
\end{proposition}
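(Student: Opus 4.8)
The plan is to establish the energy bound by differentiating the modified energy functional $E(v^\omega)$ from \eqref{equ:energy} along the flow of \eqref{equ:nlw_probabilistic_energy_bounds} and running a Gronwall argument. First I would note that $v^\omega$ solves $-\partial_t^2 v^\omega + \Delta v^\omega - v^\omega + |u_f^\omega + v^\omega|^{p-1}(u_f^\omega + v^\omega) = -v^\omega$, so that
\[
 \frac{d}{dt} E(v^\omega) = \int_{\bR^3} \partial_t v^\omega \bigl(-v^\omega\bigr) + \frac{1}{p+1} \partial_t\bigl( |u_f^\omega + v^\omega|^{p+1} \bigr) - |u_f^\omega + v^\omega|^{p-1}(u_f^\omega + v^\omega)\,\partial_t v^\omega \, dx,
\]
and the point of placing $u_f^\omega$ inside the potential term is precisely that the $\partial_t v^\omega$ contributions from the last two terms cancel, leaving only
\[
 \frac{d}{dt} E(v^\omega) = -\int_{\bR^3} v^\omega\, \partial_t v^\omega \, dx + \int_{\bR^3} |u_f^\omega + v^\omega|^{p-1}(u_f^\omega + v^\omega)\, \partial_t u_f^\omega \, dx.
\]
The first term is harmless, bounded by $E(v^\omega)$ via Cauchy-Schwarz and Young. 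The substance is the second term, the ``error term'' driven by $\partial_t u_f^\omega$.

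The next step is to estimate the error term. Writing $\partial_t u_f^\omega = \langle\nabla\rangle \tilde u_f^\omega$ and integrating by parts to move $\langle\nabla\rangle$ onto the other factor, the error term becomes (up to routine bookkeeping) bounded by
\[
 \bigl\| \langle\nabla\rangle \bigl( |u_f^\omega + v^\omega|^{p-1}(u_f^\omega + v^\omega) \bigr) \bigr\|_{L^{r'}_x} \bigl\| \tilde u_f^\omega \bigr\|_{L^{r}_x}
\]
for a suitable dual pair, but it is cleaner to instead move only a fractional derivative $\langle\nabla\rangle^{1-s+\delta}$ onto the nonlinear factor and keep $\langle\nabla\rangle^{s-\delta}\tilde u_f^\omega$ on the other — this is exactly the pairing of the $H^{s-1}$ object $\partial_t u_f^\omega$ against something at regularity $H^{1-s}$ alluded to in the remark. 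One then splits $|u_f^\omega + v^\omega|^{p-1}(u_f^\omega + v^\omega)$ using the triangle inequality into a purely-$u_f^\omega$ piece (controlled by $B^\omega(T)$ using Lemma~\ref{lem:large_deviation_estimate_p+1} and Corollary~\ref{cor:large_deviation_estimates_free_evolution_with_infty}) and pieces involving $v^\omega$. For the $v^\omega$ pieces one applies the fractional chain rule \eqref{equ:fractional_chain_rule} with $G(z) = |z|^{p-1}z$, $\sigma = 1-s+\delta < 1$, distributing $\langle\nabla\rangle^{1-s+\delta}$ onto a single factor of $u_f^\omega + v^\omega$ at exponent $L^2_x$ and putting $|u_f^\omega + v^\omega|^{p-1}$ in an $L^{r_1}_x$ with $\frac{1}{r_1} = \frac12 - \frac{1}{q_0}$ for appropriate $q_0$; the interpolation inequality \eqref{equ:interpolation_inequality_in_proposition} is then used to convert $\|\langle\nabla\rangle^{1-s+\delta+}(u_f^\omega+v^\omega)\|_{L^2_x}$-type quantities into products of an $H^1_x$ norm (hence $E(v^\omega)^{1/2}$) and a lower-regularity norm of $u_f^\omega$, and Hölder plus Gagliardo--Nirenberg/Sobolev embedding turns $\||u_f^\omega+v^\omega|^{p-1}\|_{L^{r_1}_x}$ into powers of $E(v^\omega)$ and of the $L^{p+1}_x$ and $L^\infty_x$-type norms of $u_f^\omega$. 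The upshot should be a differential inequality of the schematic form
\[
 \frac{d}{dt} E(v^\omega) \lesssim \Bigl( 1 + \bigl\| \langle\nabla\rangle^{s-\delta}\tilde u_f^\omega \bigr\|_{L^\infty_x} \Bigr) E(v^\omega) + (\text{terms from } B^\omega \text{ integrated in time}),
\]
possibly after also using Young's inequality to absorb any super-linear-in-$E$ contributions that come with a small power of a time-integrable random factor.

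Finally, I would integrate this differential inequality via Gronwall to obtain
\[
 \sup_{t\in[0,T]} E(v^\omega(t)) \lesssim \Bigl( E(v^\omega(0)) + B^\omega(T) \Bigr) e^{C(T + A^\omega(T))},
\]
note that $E(v^\omega(0)) \lesssim \|(v_1,v_2)\|_{\cH^1}^2 + \|v_1\|_{L^{p+1}_x}^{p+1} + \|f_1^\omega\|_{L^{p+1}_x}^{p+1}$ by the triangle inequality $|u_f^\omega(0) + v_1|^{p+1} \lesssim |v_1|^{p+1} + |f_1^\omega|^{p+1}$, and that $\|(v^\omega(t),\partial_t v^\omega(t))\|_{\cH^1}^2 \lesssim E(v^\omega(t))$ directly from the definition \eqref{equ:energy}. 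The preliminary almost-sure statement \eqref{equ:L_t_loc_almost_surely_energy_bounds} is checked beforehand: each of the four memberships follows from Lemma~\ref{lem:large_deviation_estimates_free_evolution}, Corollary~\ref{cor:large_deviation_estimates_free_evolution_with_infty}, and Lemma~\ref{lem:large_deviation_estimate_p+1} applied with the appropriate $(q,r)$ and with $\varepsilon$ chosen small enough that the weight exponent $\delta$ satisfies the required $\delta > 1 + 1/q$ (this is where the hypothesis $0<\delta<\frac{p+1}{p-1}s - 1$ together with $s<1$ enters, ensuring $\frac{p-1}{2}(1-s+\delta+)$ stays in the admissible window and that a common full-measure set $\Sigma$ can be extracted by countable intersection over dyadic time horizons). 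The main obstacle I anticipate is the bookkeeping in the fractional-derivative step: one must verify that all the Hölder exponents produced by combining \eqref{equ:fractional_chain_rule}, \eqref{equ:interpolation_inequality_in_proposition}, and Sobolev embedding are simultaneously admissible and that every resulting power of $E(v^\omega)$ is at most linear (after Young's inequality) so that Gronwall actually closes — this is delicate precisely because $p$ is non-integer, so there is no slack from an algebraic nonlinearity, and it is the reason the regularity restriction $s > \frac{p-1}{p+1}$ appears.
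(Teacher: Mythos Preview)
Your overall strategy---differentiate the modified energy, identify the error term $\int \partial_t u_f^\omega\,|w|^{p-1}w\,dx$ with $w=u_f^\omega+v$, move fractional derivatives, then Gronwall---is exactly what the paper does. The gap is in the execution of the error-term estimate. Because $A^\omega(T)$ places $\langle\nabla\rangle^{s-\delta}\tilde u_f^\omega$ in $L^\infty_x$, the dual factor $\langle\nabla\rangle^{1-s+\delta}(|w|^{p-1}w)$ would have to sit in $L^1_x$, where the fractional chain rule \eqref{equ:fractional_chain_rule} is not available. The paper handles this with a Littlewood--Paley paraproduct decomposition: one writes $|w|^{p-1}w=\sum_{N_1,N_2}P_{N_1}(|w|^{p-1})\,P_{N_2}w$ and, in each frequency regime, places the $N^{1-s+\delta+}$ weight on whichever factor carries the high frequency. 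In the main case ($N_1\sim N$) the chain rule is applied to $|w|^{p-1}$ in $L^{(p+1)/p}_x$, which lands $|\nabla|^{1-s+\delta+}w$ in $L^{(p+1)/2}_x$; this is then interpolated between $L^2_x$ and $L^{p+1}_x$ via \eqref{equ:interpolation_inequality_in_proposition} to produce exactly $\||\nabla|^{\frac{p-1}{2}(1-s+\delta+)}w\|_{L^2_x}$. That specific interpolation is what generates the exponent $\frac{p-1}{2}(1-s+\delta+)$ appearing in $B^\omega(T)$ and is the origin of the threshold $s>\frac{p-1}{p+1}$.

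Your alternative---put $|\nabla|^{1-s+\delta}w$ directly in $L^2_x$ and $|w|^{p-1}$ in the complementary $L^{r_1}_x$---forces $r_1=2$ (to match the $L^\infty_x$ in $A^\omega$), hence $\|w\|_{L^{2(p-1)}_x}^{p-1}$; for $p>4$ one has $2(p-1)>6>p+1$ and this is not controlled by $E(v)$, so the argument does not close. The interpolation you describe (trading $\||\nabla|^{1-s+\delta}w\|_{L^2_x}$ for a product of $H^1_x$ and ``lower-regularity'' norms) stays entirely within $L^2_x$-based spaces and cannot manufacture the $\frac{p-1}{2}$ factor needed in $B^\omega$. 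Finally, splitting $|u_f^\omega+v|^{p-1}(u_f^\omega+v)$ into a pure-$u_f^\omega$ piece and ``$v$ pieces'' is problematic for non-integer $p$: there is no clean decomposition on which to run the chain rule. The paper avoids this entirely by keeping $w$ intact and only separating $u_f^\omega$ from $v$ at the very end, once the estimate has been reduced to an $L^2_x$ norm of $|\nabla|^{\frac{p-1}{2}(1-s+\delta+)}w$.
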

\begin{proof}
 The large deviation estimates from Lemma~\ref{lem:large_deviation_estimates_free_evolution}, Corollary~\ref{cor:large_deviation_estimates_free_evolution_with_infty}, and Lemma~\ref{lem:large_deviation_estimate_p+1} imply that there exists $\Sigma \subset \Omega$ with $\bP(\Sigma) = 1$ such that \eqref{equ:L_t_loc_almost_surely_energy_bounds} holds for all $\omega \in \Sigma$.
 
 \medskip

 Now for fixed $\omega \in \Sigma$, let $T > 0$ be arbitrary and let
 \[
  (v^\omega, \partial_t v^\omega) \in C\bigl([0,T]; H^1_x(\bR^3)\bigr) \cap L^{\frac{2p}{p-3}}_t L^{2p}_x\bigl([0,T]\times\bR^3\bigr) \times C\bigl([0,T]; L^2_x(\bR^3)\bigr)
 \]
 be a solution to \eqref{equ:nlw_probabilistic_energy_bounds}. For ease of notation we drop the superscript $\omega$ from $v^\omega$ for the remainder of the proof and consider the energy functional
 \[
  E(v(t)) = \int_{\bR^3} \frac{1}{2} |\nabla_x v|^2 + \frac{1}{2} |\partial_t v|^2 + \frac{1}{2} |v|^2 + \frac{1}{p+1} |u_f^\omega + v|^{p+1} \, dx.
 \]
For $\omega \in \Sigma$, this functional is well-defined and finite for all $0 \leq t \leq T$ by Sobolev embedding and the properties from \eqref{equ:L_t_loc_almost_surely_energy_bounds}.  Using that $v$ is a solution to the nonlinear wave equation \eqref{equ:nlw_probabilistic_energy_bounds} and that $\partial_t u_f^\omega = \langle \nabla \rangle \tilde{u}_f^\omega$, we compute 
 \begin{equation} \label{equ:time_derivative_energy}
  \partial_t E(v(t)) = \int_{\bR^3} v \partial_t v \, dx + \int_{\bR^3}  \langle \nabla \rangle \tilde{u}_f^\omega \, |u_f^\omega + v|^{p-1} (u_f^\omega + v) \, dx,
 \end{equation}
 where the second term on the right hand side is to be understood as an $H^{s-1}_x \times H^{1-s}_x$ pairing. The first term can be easily bounded by
 \begin{equation}
  \Big| \int_{\bR^3} v \partial_t v \, dx \Big| \lesssim \|v\|_{L^2_x} \|\partial_t v\|_{L^2_x} \lesssim E(v).
 \end{equation}
 In order to estimate the second term on the right hand side of \eqref{equ:time_derivative_energy}, we use an inhomogeneous dyadic decomposition to expand
 \begin{align*}
  \int_{\bR^3}  \langle \nabla \rangle \tilde{u}_f^\omega \, |u_f^\omega + v|^{p-1} (u_f^\omega + v) \, dx &= \sum_{N \geq 1} \int_{\bR^3} \widetilde{P}_N \bigl( \langle \nabla \rangle \tilde{u}_f^\omega \bigr) P_N \bigl( |u_f^\omega + v|^{p-1} (u_f^\omega + v) \bigr) \, dx.
 \end{align*}
 {\bf Case 1:} $N \lesssim 1$. By Bernstein's and Young's inequality
 \begin{align*}
  \bigg| \sum_{N \lesssim 1} \int_{\bR^3} \widetilde{P}_N \bigl( \langle \nabla \rangle \tilde{u}_f^\omega \bigr) P_N \bigl( |u_f^\omega + v|^{p-1} (u_f^\omega + v) \bigr) \, dx \bigg| &\lesssim \bigl\| \tilde{u}_f^\omega \bigr\|_{L^{p+1}_x} \bigl\| |u_f^\omega + v|^{p-1} (u_f^\omega +v) \bigr\|_{L^{\frac{p+1}{p}}_x} \\
  &\lesssim \bigl\| \tilde{u}_f^\omega \bigr\|_{L^{p+1}_x}^{p+1} + \bigl\| u_f^\omega + v \bigr\|_{L^{p+1}_x}^{p+1} \\
  &\lesssim \bigl\| \tilde{u}_f^\omega \bigr\|_{L^{p+1}_x}^{p+1} + E(v).
 \end{align*}
 {\bf Case 2:} $N \gg 1$. We further decompose dyadically
 \begin{align*}
  &\sum_{N \gg 1} \int_{\bR^3} \widetilde{P}_N \bigl( \langle \nabla \rangle \tilde{u}_f^\omega \bigr) P_N \bigl( |u_f^\omega + v|^{p-1} (u_f^\omega + v) \bigr) \, dx \\
  &= \sum_{N \gg 1} \sum_{N_1, N_2 \geq 1} \int_{\bR^3} \widetilde{P}_N \bigl( \langle \nabla \rangle \tilde{u}_f^\omega \bigr) P_N \Bigl( P_{N_1} \bigl( |u_f^\omega + v|^{p-1} \bigr) P_{N_2} \bigl( u_f^\omega + v \bigr) \Bigr) \, dx.
 \end{align*}
 {\bf Case 2a:} $N_1 \sim N$ and $N_2 \lesssim N_1$. Summing over dyadic $N, N_2$ at a slight loss of a power in $N_1$, we obtain
 \begin{align*}
 &\bigg| \sum_{N \gg 1} \sum_{N_1 \sim N} \sum_{N_2 \lesssim N_1} \int_{\bR^3} {\widetilde P}_N \big( \langle \nabla \rangle \tilde{u}_f^\omega \big) P_N \Big( P_{N_1} \big( |u_f^\omega + v|^{p-1} \big) P_{N_2} \big( u_f^\omega + v \big) \Big) \, dx \bigg| \\
 &\lesssim \big\| \langle \nabla \rangle^{s-\delta} \tilde{u}_f^\omega \big\|_{L^\infty_x} \sup_{N_1 \gg 1} N_1^{1-s+\delta +} \big\| P_{N_1} |u_f^\omega + v|^{p-1} \big\|_{L^{\frac{p+1}{p}}_x} \big\| u_f^\omega + v \big\|_{L^{p+1}_x} \\
 &\lesssim \big\| \langle \nabla \rangle^{s-\delta} \tilde{u}_f^\omega \big\|_{L^\infty_x} \big\| |\nabla|^{1-s+\delta+} \big( |u_f^\omega + v|^{p-1} \big) \big\|_{L^{\frac{p+1}{p}}_x} \big\| u_f^\omega + v \big\|_{L^{p+1}_x}. 
 \end{align*}
 Using the fractional chain rule \eqref{equ:fractional_chain_rule} with $\frac{p}{p+1} = \frac{2}{p+1} + \frac{p-2}{p+1}$, this is bounded by
 \begin{equation} \label{equ:bound_on_difficult_term}
  \big\| \langle \nabla \rangle^{s-\delta} \tilde{u}_f^\omega \big\|_{L^\infty_x} \big\| |\nabla|^{1-s+\delta+} (u_f^\omega + v) \big\|_{L^{\frac{p+1}{2}}_x} \big\| u_f^\omega + v \big\|^{p-1}_{L^{p+1}_x}.
 \end{equation} 
We invoke the interpolation inequality \eqref{equ:interpolation_inequality_in_proposition} in the form 
 \begin{equation} \label{equ:interpolation_inequality}
  \bigl\| |\nabla|^\sigma f \bigr\|_{L^{\frac{p+1}{2}}_x} \lesssim \bigl\| |\nabla|^{\frac{p-1}{2} \sigma} f \bigr\|_{L^2_x}^{\frac{2}{p-1}} \bigl\|f\bigr\|_{L^{p+1}_x}^{\frac{p-3}{p-1}}
 \end{equation}
 for $0 < \sigma < 1$ to bound \eqref{equ:bound_on_difficult_term} by
 \begin{align*}
  \big\| \langle \nabla \rangle^{s-\delta} \tilde{u}_f^\omega \big\|_{L^\infty_x} \big\| |\nabla|^{\frac{p-1}{2} (1-s+\delta+)} (u_f^\omega + v) \big\|_{L^2_x}^{\frac{2}{p-1}} \big\| u_f^\omega + v \big\|_{L^{p+1}_x}^{\frac{(p-2)(p+1)}{p-1}},
 \end{align*}
which we can estimate using Young's inequality
 \begin{equation} \label{equ:difficult_term_after_young}
  \begin{split}
   &\big\| \langle \nabla \rangle^{s-\delta} \tilde{u}_f^\omega \big\|_{L^\infty_x} \Big( \big\| |\nabla|^{\frac{p-1}{2} (1-s+\delta+)} (u_f^\omega + v) \big\|_{L^2_x}^2 + \big\| u_f^\omega + v \big\|_{L^{p+1}_x}^{p+1} \Big) \\
   &\lesssim \big\| \langle \nabla \rangle^{s-\delta} \tilde{u}_f^\omega \big\|_{L^\infty_x} \Big( \big\| |\nabla|^{\frac{p-1}{2} (1-s+\delta+)} u_f^\omega \big\|_{L^2_x}^2 + \big\| |\nabla|^{\frac{p-1}{2} (1-s+\delta+)} v \big\|_{L^2_x}^2 + \big\| u_f^\omega + v \big\|_{L^{p+1}_x}^{p+1} \Big).
  \end{split}
 \end{equation}
By our assumptions on $s$ and $\delta$, we have that
 \[
  \frac{p-1}{2} (1 - s + \delta +) < s < 1,
 \]
 thus, we may bound the right hand side of \eqref{equ:difficult_term_after_young} by
 \begin{align*}
  \big\| \langle \nabla \rangle^{s-\delta} \tilde{u}_f^\omega \big\|_{L^\infty_x} \Big( \big\| \langle \nabla \rangle^{\frac{p-1}{2} (1-s+\delta+)} u_f^\omega \big\|_{L^2_x}^2 + \big\| v \big\|_{L^2_x}^2 + \big\| \nabla_x v \big\|_{L^2_x}^2 + \big\| u_f^\omega + v \big\|_{L^{p+1}_x}^{p+1} \Big).
 \end{align*}
 {\bf Case 2b:} $N_2 \sim N$ and $N_1 \lesssim N_2$. Summing over dyadic $N$, $N_1$, we bound
 \begin{align*}
  &\bigg| \sum_{N \gg 1} \sum_{N_2 \sim N} \sum_{N_1 \lesssim N_2} \int_{\bR^3} {\widetilde P}_N \big( \langle \nabla \rangle \tilde{u}_f^\omega \big) P_N \Big( P_{N_1} \big( |u_f^\omega + v|^{p-1} \big) P_{N_2} \big( u_f^\omega + v \big) \Big) \, dx \bigg| \\
  &\lesssim \big\| \langle \nabla \rangle^{s-\delta} \tilde{u}_f^\omega \big\|_{L^\infty_x} \sup_{N_2 \gg 1} N_2^{1-s+\delta+} \big\| u_f^\omega + v \big\|^{p-1}_{L^{p+1}_x} \big\| P_{N_2} (u_f^\omega + v) \big\|_{L^{\frac{p+1}{2}}_x}.
 \end{align*}  
 Using the interpolation inequality
 \[
  \|f\|_{L^{\frac{p+1}{2}}_x} \lesssim \|f\|_{L^2_x}^{\frac{2}{p-1}} \|f\|_{L^{p+1}_x}^{\frac{p-3}{p-1}},
 \]
 this is bounded by
 \begin{align*}
  &\big\| \langle \nabla \rangle^{s-\delta} \tilde{u}_f^\omega \big\|_{L^\infty_x} \sup_{N_2 \gg 1} N_2^{1-s+\delta+} \big\| u_f^\omega + v \big\|^{p-1}_{L^{p+1}_x} \big\| P_{N_2} (u_f^\omega + v) \big\|_{L^2_x}^{\frac{2}{p-1}} \big\| P_{N_2} (u_f^\omega + v) \big\|_{L^{p+1}_x}^{\frac{p-3}{p-1}} \\
  &\lesssim \big\| \langle \nabla \rangle^{s-\delta} \tilde{u}_f^\omega \big\|_{L^\infty_x} \sup_{N_2 \gg 1} \Big\| N_2^{\frac{p-1}{2} (1-s+\delta+)} P_{N_2} (u_f^\omega + v) \Big\|_{L^2_x}^{\frac{2}{p-1}} \big\| u_f^\omega + v \big\|_{L^{p+1}_x}^{\frac{(p-2)(p+1)}{p-1}}.
 \end{align*}
By Bernstein's and Young's inequality we finally obtain the estimate
 \begin{align*}
  &\big\| \langle \nabla \rangle^{s-\delta} \tilde{u}_f^\omega \big\|_{L^\infty_x} \Big( \big\| |\nabla|^{\frac{p-1}{2} (1-s+\delta+)} (u_f^\omega + v) \big\|_{L^2_x}^2 + \big\| u_f^\omega + v \big\|_{L^{p+1}_x}^{p+1} \Big) \\
  &\lesssim \big\| \langle \nabla \rangle^{s-\delta} \tilde{u}_f^\omega \big\|_{L^\infty_x} \Big( \big\| \langle \nabla \rangle^{\frac{p-1}{2} (1-s+\delta+)} u_f^\omega \big\|_{L^2_x}^2 + \big\| v \big\|_{L^2_x}^2 + \big\| \nabla_x v \big\|_{L^2_x}^2 + \big\| u_f^\omega + v \big\|_{L^{p+1}_x}^{p+1} \Big) \\
  &\lesssim \big\| \langle \nabla \rangle^{s-\delta} \tilde{u}_f^\omega \big\|_{L^\infty_x}^2 + \big\| \langle \nabla \rangle^{\frac{p-1}{2} (1-s+\delta+)} u_f^\omega \big\|_{L^2_x}^4 \\
 &\qquad + \big\| \langle \nabla \rangle^{s-\delta} \tilde{u}_f^\omega \big\|_{L^\infty_x} \Big( \big\| v \big\|_{L^2_x}^2 + \big\| \nabla_x v \big\|_{L^2_x}^2 + \big\| u_f^\omega + v \big\|_{L^{p+1}_x}^{p+1} \Big).
 \end{align*}
 {\bf Case 2c:} $N_1, N_2 \gtrsim N$ and $N_1 \sim N_2$. Here we can proceed similarly to Case 2b.

 \medskip

 Putting the above estimates together, we find that for $0 \leq t \leq T$,
 \begin{align*}
  \partial_t E(v(t)) &\leq C \Big( \big\| \tilde{u}_f^\omega(t) \big\|_{L^{p+1}_x}^{p+1} + \big\| \langle \nabla \rangle^{s-\delta} \tilde{u}_f^\omega(t) \big\|_{L^\infty_x}^2 + \big\| \langle \nabla \rangle^{\frac{p-1}{2} (1-s+\delta+)} u_f^\omega(t) \big\|_{L^2_x}^4 \\
  &\quad \quad \quad \quad + \big( 1 + \big\| \langle \nabla \rangle^{s-\delta} \tilde{u}_f^\omega(t) \big\|_{L^\infty_x} \big) E(v(t)) \Big).
 \end{align*}
 Gronwall's inequality then yields that
 \begin{align*}
  E(v(T)) &\leq \exp \biggl( C \int_0^T \bigl( 1 + \bigl\| \langle \nabla \rangle^{s-\delta} \tilde{u}_f^\omega(t) \bigr\|_{L^\infty_x} \bigr) \, dt \biggr) \times \\
  &\quad \times \biggl( E(0) + C \int_0^T \Bigl( \bigl\| \tilde{u}_f^\omega(t) \bigr\|_{L^{p+1}_x}^{p+1} + \bigl\| \langle \nabla \rangle^{s-\delta} \tilde{u}_f^\omega(t) \bigr\|_{L^\infty_x}^2 + \bigl\| \langle \nabla \rangle^{\frac{p-1}{2} (1-s+\delta+)} u_f^\omega(t) \bigr\|_{L^2_x}^4 \Bigr) \, dt \biggr) \\
  &\leq C e^{C (T + A^\omega(T))} \Bigl( \|(v_1, v_2)\|_{\cH^1}^2 + \|v_1\|_{L^{p+1}_x}^{p+1} + \| f_1^\omega \|_{L^{p+1}_x}^{p+1} + B^\omega(T) \Bigr). \qedhere
 \end{align*}
\end{proof}

Theorem~\ref{main_theorem} is now an easy consequence of the previous results.
\begin{proof}[Proof of Theorem~\ref{main_theorem}]
 The large deviation estimates from Lemma~\ref{lem:large_deviation_estimates_free_evolution}, Corollary~\ref{cor:large_deviation_estimates_free_evolution_with_infty}, and Lemma~\ref{lem:large_deviation_estimate_p+1} imply that there exists $\widetilde{\Omega} \subset \Omega$ with $\bP(\widetilde{\Omega}) = 1$ such that for all $\omega \in \widetilde{\Omega}$, \eqref{equ:L_t_loc_almost_surely_energy_bounds} holds and 
 \[
  u_f^\omega \in L^{\frac{2p}{p-3}}_{t,loc} L^{2p}_x(\bR\times\bR^3).
 \]
 Fix $\omega \in \widetilde{\Omega}$. We invoke Lemma~\ref{lem:lwp} to obtain a local solution $(v^\omega, \partial_t v^\omega)$ to \eqref{equ:forced_nlw_main_theorem}. We can iterate the local well-posedness of Lemma~\ref{lem:lwp} to obtain a global solution provided the $\cH^1(\bR^3)$ norm of $(v^\omega(t), \partial_t v^\omega(t))$ does not blow up in finite time. This is guaranteed by the energy bounds of Proposition~\ref{prop:energy_bounds}, which proves the assertion.
\end{proof}

\bibliographystyle{myamsplain}
\bibliography{references}

\end{document}